\documentclass[final,leqno]{siamltex}
\usepackage[english]{babel}
\usepackage{emptypage}
\usepackage{environ,tikz}
\usepackage{envmath}
\usepackage{pgfplots}
\usepgfplotslibrary{polar}
     \usepgflibrary{shapes.geometric}
     \usetikzlibrary{calc}
     \usetikzlibrary{automata,positioning,arrows}

\pgfplotsset{compat=1.3}
\usetikzlibrary{arrows, decorations.markings}
\usetikzlibrary{decorations.text}

\usetikzlibrary{plotmarks}

\usepackage{amsmath,amsfonts,amssymb}%
\usepackage{bbm,enumerate,stackrel,graphicx}
\usepackage{hyperref}
\usepackage{hypcap}
\usepackage{paralist}
\usepackage{showkeys}

\newtheorem{lemm}[theorem]{Lemma}
\newtheorem{defim}[theorem]{Definition}
\newtheorem{propm}[theorem]{Proposition}

\newtheorem{remkm}[theorem]{Remark}

\newtheorem{hyp}[theorem]{Assumptions}
\usepackage{subfigure}
\newtheorem{thmm}[theorem]{Theorem}

\newtheorem{exp et remarque}[theorem]{Exemples et remarques}

\newcommand{\qed}{{ \hfill
    {\unskip\kern 6pt\penalty 500 \raise -2pt\hbox{\vrule\vbox to 6pt{\hrule width 6pt
          \vfill\hrule}\vrule} \par}   }}

\DeclareMathOperator*{\argmax}{argmax}

\begin{document}
\title{Blocking of Bistable Front Propagation: Application to Wolbachia Population Replacement
}
\author{%
  Saoussen Latrach\thanks{Université Sorbonne Paris Nord, Laboratoire Analyse, Géométrie et Applications, LAGA,F-93430 Villetaneuse, France\\
  Emails: latrechsawssen@gmail.com, vauchelet@math.univ-paris13.fr, zaag@math.univ-paris13.fr}%
  \and
  Nicolas Vauchelet\footnotemark[1]%
  \and
  Hatem Zaag\footnotemark[1]%
}

\maketitle

\begin{abstract}
    In order to control epidemics of mosquito-borne diseases such as dengue, for which there is still no vaccine available, several strategies aim to directly target the mosquito population. One technique involves introducing a symbiotic bacterium called Wolbachia into the population, which blocks the transmission of pathogens. A mathematical model of the introduction of this bacterium and the replacement of the mosquito population with a population carrying this bacterium has been developed, proving that in a homogeneous environment, there is spatial propagation of the replacement. The objective of this paper is to study the blocking of this propagation in cases where the environment is not homogeneous. In particular, we are considering scenarios where the environmental carrying capacity varies significantly across different regions of the spatial domain.

To achieve this objective, we first reduce the original 2×2 reaction-diffusion system to a single closed equation, providing a simplified framework for studying the dynamics of Wolbachia propagation. We then investigate the blocking of the propagation in the population replacement process, focusing on the effects of spatial variations in the environmental carrying capacity. Numerical simulations have been performed to validate the theoretical results.

\end{abstract}
\textbf{Keywords :}  Reaction-diffusion system, travelling waves, wave blocking, Wolbachia bacterium, vector-borne diseases.\\

\section{Introduction}
The fight against dengue, a global scourge (see \cite{bhatt2013global}), and other arboviruses has mobilized significant efforts within the scientific community. The development of innovative techniques for vector control has become a particularly dynamic research area. Among these approaches, the use of Wolbachia in Aedes mosquitoes to control the spread of diseases (see \cite{walker2011w}; \cite{alphey2013genetic}) has gained increasing interest. This endosymbiotic bacterium, transmitted maternally, causes cytoplasmic incompatibility: crosses between infected males and uninfected females lead to infertility, while blocking viral replication in mosquito hosts. Artificial infection of mosquitoes with Wolbachia is feasible under laboratory conditions, and vertical transmission allows for rapid and mass reproduction of an infected colony.
The first mathematical models of this method include the work of Barton et al. \cite{barton2011spatial}, Hughes et al. \cite{hughes2013modelling}, and Hancock et al. \cite{hancock2011strategies}, who contributed to a better understanding of the dynamics of this infection and its implications for populations.
The impact of spatial dimension in vector control processes has also attracted attention from many researchers. Several studies have explored the conditions under which mathematical models incorporating spatial variations can predict the blocking of a biological invasion, such as the one resulting from the release of Wolbachia-infected mosquitoes in vector control programs.
The observation of biological invasions and the mechanisms used to stop them is a well-studied topic in the literature. For example, the experimental study conducted by Atyame et al. \cite{atyame2015stable} showed that stable coexistence between several natural Wolbachia strains could be observed in a population of Culex pipiens. Several hypotheses have been proposed to explain this stability. Nadin et al. \cite{nadin2018hindrances}, in a simplified mathematical model, support some of the analyses conducted in these studies. In particular, "differentiated adaptation" alone is insufficient to explain this blocking, but a marked population gradient could play a crucial role. These studies thus allow quantification of the necessary intensity of this gradient, which could either confirm or disprove this hypothesis.
Our own results in this paper provide further support to these analyses. We show that the environmental capacity $ K $ plays a crucial role in propagation. When the environmental capacity spatial variations
are small, blocking propagation is impossible. However, when these variations are sufficiently pronounced, they can halt the invasion. This result highlights the importance of environmental gradients in controlling biological invasions and opens new perspectives for explaining experimental observations.\\ Although long-term field experiments remain limited, artificial releases of Wolbachia-infected mosquitoes \cite{hoffmann2011successful, nguyen2015field} have revealed phenomena such as stable fronts and blockages \cite{hoffmann2014stability, yeap2011dynamics}. These phenomena have been modeled in several studies, notably using reaction-diffusion equations \cite{barton2011spatial, chan2013modelling, Bliman} and models accounting for habitat heterogeneity \cite{hancock2012modelling}.
The reaction-diffusion equations, first introduced in \cite{fisher1937wave} and \cite{kolmogorov1937etude}, provide a framework for mathematically describing biological invasions. These models, in the form:
\begin{equation}\label{eq-0} 
\partial_t u - D \partial_{xx} u = f(u), 
\end{equation}
where $ t $ represents time, $ x \in \mathbb{R} $ denotes the spatial variables, and $ u(t, x) $ is the density of an allele at a given time and position, are widely used in population dynamics. This model is particularly appreciated for its ability to describe spatial spread, especially through "traveling wave" solutions that are time-invariant and propagate at a constant speed. These waves retain a constant shape and allow studying the effects of invasion dynamics under specific conditions.

Blocking propagation is a well-documented phenomenon in biological contexts, as observed for genetic traits and infections in heterogeneous populations. Previous work has shown that spatial heterogeneities in population density, or even the Allee effect, can limit the spread of a trait or infection (see \cite{barton1986effects}). This mechanism, often referred to as "pinning" or blocking, can be explained by spatial variations in the environment that hinder invasion (see \cite{keitt2001allee}). The bistable behavior of reaction-diffusion systems is a key phenomenon in studying such invasions. Bistability refers to the coexistence of two stable states, typically characterized by two distinct equilibrium solutions for $ u $. In the context of biological invasions, this implies that both invasion and non-invasion states can persist, depending on initial conditions and system parameters.
Our approach is based on the analysis of a modified equation in one spatial dimension that accounts for spatial variations:
\begin{equation}
\partial_{t}p - D\partial_{xx} p - 2D\frac{K^{'}}{K}    \partial_{x} p = f(p) + \frac{K^{''}}{K} g(p).
\end{equation}
This formulation explicitly integrates spatial heterogeneity through the terms $ \frac{K{'}}{K} $ and $ \frac{K{''}}{K} $, a determining factor in blocking propagation. Mathematically, our study relies on demonstrating the existence of stationary solutions for this equation, using the shooting method, a commonly employed approach for solving partial differential equations. In particular, we show that when $ \frac{K^{'}}{K} $ and $ \frac{K^{''}}{K} $ are of small amplitude, propagation is not blocked. On the other hand, if these terms are sufficiently large, propagation is blocked.
To simplify the analysis, we assume that the diffusion coefficient is normalized, meaning that $D = 1$ and will maintain this assumption throughout the remainder of the work.\\
In this paper, we first introduce the mathematical model in Section \ref{section2} and present the main results of our study. Section \ref{section3} is devoted to model reduction. Section \ref{section4} contains the preliminary results and the proofs of the main results. In Section \ref{section5}, we provide numerical simulations to illustrate the phenomena of propagation and blocking. Finally, Section \ref{section6} concludes the paper by summarizing the key points and providing some perspectives for future research.

\section{Modelling and main results} \label{section2}~\par
Let $ T > 0 $ denote a fixed horizon of time.
\subsection{Mathematical model}~~\par
The model for infected and non-infected mosquitoes proposed in \cite{strugarek2016reduction} can be used to derive problem \eqref{eq-0} from a unique two-populations model posed on $\mathbb{R}^d$, with $d\ge 1$. In this model, $n_i$ represents the density of infected mosquitoes, and $n_u$ represents the density of non-infected mosquitoes. The dynamics of these quantities is governed by the following system~:
\begin{EqSystem}\label{2.1}
\partial_{t}n_{i}-\Delta n_{i} = (1-s_{f})F_{u}n_{i}(1-\frac{N}{K})-\delta d_{u}n_{i},& in ~~ (0,T)\times  \mathbb{R}^d, \\\label{2.2}
\partial_{t}n_{u}-\Delta n_{u} = F_{u}n_{u}(1-s_{h}p)(1-\frac{N}{K})- d_{u}n_{u},& in ~~ (0,T)\times  \mathbb{R}^d, \\ 
N = n_{i} + n_{u}, \qquad  p=\frac{n_{i}}{n_{i}+n_{u}}.
\end{EqSystem}
This system is complemented by initial conditions $n_i(t=0,x) = n_i^{init}(x) \ge 0$,  $n_u(t=0,x) = n_u^{init}(x) > 0$. The parameters are defined as follows: 
\begin{itemize}
    \item[$\bullet$]  $d_u$, $d_i = \delta d_u$ with $\delta > 1$ represent the death rates for uninfected and infected mosquitoes, respectively. We assume that $d_i > d_u$ because Wolbachia shortens their lifespan;
    \item[$\bullet$] $s_f \in (0,1)$ is a dimensionless parameter that reflects the reduced fecundity of infected mosquitoes;
    \item[$\bullet$] $F_u$, $F_i = (1 - s_f) F_u$ denote the fecundity rates for uninfected and infected mosquitoes, respectively. We assume that $F_i < F_u$, as Wolbachia reduces fecundity;
    \item[$\bullet$] $ s_h \in (0, 1) $ is the cytoplasmic incompatibility parameter, representing the fraction of eggs from uninfected females fertilized by infected males that do not hatch. In other words, a proportion $ 1 - s_h $ of the eggs fertilized by infected males actually hatch. Cytoplasmic incompatibility is perfect when $ s_h = 1 $;
    \item[$\bullet$] $K$ denotes the environmental capacity.

\end{itemize}

All the constants listed above are assumed to be positive.
More precisely, we assume
\begin{equation}\label{hyp_coeff}
\delta>1, \quad 0<s_f<1, \quad 0<s_h \leq 1.
\end{equation}
We refer to e.g. \cite{almeida2019optimal,Vasilieva} and references therein for the analysis of similar dynamical systems without spatial diffusion.
In particular, we have that under the assumption
\begin{equation}\label{s_h}
s_f + \delta - 1 < s_h \delta,
\end{equation}
there are 4 equilibria, the extinction and the co-existence equilibria are unstable, whereas the \textit{Wolbachia}-invasion equilibria and the equilibria without \textit{Wolbachia} are stable.
Existence and uniqueness of solutions for such a reaction–diffusion system are by now well-known, see e.g. \cite{evans2010partial, perthame2015parabolic}.
It is worth noting that in the reaction term of the second equation, the term $ -\frac{n_{i}}{n_{i}+n_{u}}$ represents the vertical transmission of the disease, while the coefficient $ s_h $ models the imperfection of this transmission due to cytoplasmic incompatibility.


\subsection{Formal reduction for large asymptotics}
We assume high fecundity, as described in \cite{strugarek2016reduction}, and introduce a small parameter $ \epsilon \ll 1 $ such that $ F_u = \frac{\tilde{F_u}}{\epsilon}.$ When fecundity becomes dominant compared to the other parameters, it is relevant to consider the asymptotic regime $ \epsilon \to 0 $ in order to simplify the system equations \eqref{2.1} and \eqref{2.2}.

This reduction method is inspired by \cite{strugarek2016reduction,almeida2019optimal}, and we extend their results in the case of non constant parameters.
In this section, we elaborate on the reduction procedure of this system in detail.
Since $n_i$ and $n_u$  will depend on $\epsilon$, we use the notation $n^{\epsilon}_i$ and $n^{\epsilon}_u$. Before we begin rigorous reasoning, we use the following assumptions:
\begin{hyp}\label{hypK}
  \begin{itemize}
  \item[1-] $K$ in $C^2(\mathbb{R}^d)\cap L^{\infty}(\mathbb{R}^d)$.
  \item[2-] $K$ is bounded from below, i.e., there exists $c' > 0$ such that $K > c'$, for all $x\in \mathbb{R}^d$.
  \item[3-] $\nabla K \in L^{2}(\mathbb{R}^d)$ and $\Delta K \in L^{1}(\mathbb{R}^d)$.
  \end{itemize}   
\end{hyp}

\textbf{Formal reasoning.} We introduce the total population as $N^{\epsilon} = n^{\epsilon}_i + n^{\epsilon}_u$ and the fraction of infected mosquitoes as $p^{\epsilon} = \frac{n^{\epsilon}_i}{n^{\epsilon}_i + n^{\epsilon}_u}$. After straightforward computations from \eqref{2.1} and \eqref{2.2}, we obtain the system  
\begin{align}
  \label{equationN}
  \partial_{t}N^{\epsilon}-& \Delta N^{\epsilon} =  \\
 &  N^{\epsilon} \left( F_{u}(1-\frac{N^{\epsilon}}{K})((1-s_{f})p^{\epsilon}+(1-p^{\epsilon})(1-s_hp^{\epsilon}))-d_{u}(\delta p^{\epsilon}+1-p^{\epsilon})\right)  \nonumber  \\
  \label{equationp}
\partial_{t}p^{\epsilon}-& \Delta p^{\epsilon} -2\frac{\nabla p^{\epsilon}\nabla N^{\epsilon}}{N^{\epsilon}} = p^{\epsilon}(1-p^{\epsilon})\left(F_{u}(1-\frac{N^{\epsilon}}{K})(s_{h}p^{\epsilon}-s_{f})+d_{u}(1-\delta)\right).
\end{align}
Recalling the notation $F_u = \frac{\tilde{F_u}}{\epsilon}$, we reformulate equation \eqref{equationN} as
\begin{equation}\label{eq_large fonct}
    \partial_{t}N^{\epsilon}-\Delta N^{\epsilon} = N^{\epsilon}\left[\frac{\tilde{F_{u}}}{\epsilon}(1-\frac{N^{\epsilon}}{K})((1-s_{f})p^{\epsilon}+(1-p^{\epsilon})(1-s_hp^{\epsilon}))-d_{u}(\delta p^{\epsilon}+1-p^{\epsilon})\right].
\end{equation}
We expect that, as $\epsilon \to 0$, we have $N^{\epsilon}(t,x) = K(x)(1+O(\epsilon))$. Then we introduce an expansion 
\begin{equation*}
    N^{\epsilon}(t,x) =K(x)(1-\epsilon z^{\epsilon}(t,x)).
\end{equation*}
Equating the leading terms in \eqref{eq_large fonct} yields 
\begin{equation*}
    z^{\epsilon}(t,x)=\frac{d_{u}[(\delta-1)p^{\epsilon}(t,x)+1]-\frac{\Delta K(x)}{K(x)}}{\tilde{F_{u}}((1-s_{f})p^{\epsilon}(t,x)+(1-p^{\epsilon}(t,x))(1-s_{h}p^{\epsilon}(t,x)))}.
\end{equation*}
Equation \eqref{equationp} then rewrites 
\begin{align*}\label{eq:main}
&\partial_{t}p^{\epsilon}-\Delta p^{\epsilon} -2\frac{\nabla K}{K}\nabla p^{\epsilon}-2\nabla \ln(1-\epsilon z^{\epsilon})\\&  =  p^{\epsilon}(1-p^{\epsilon})\left((s_{h}p^{\epsilon}-s_{f})\frac{d_{u}[(\delta-1)p^{\epsilon}+1]-\frac{\Delta K}{K}}{(1-s_{f})p^{\epsilon}+(1-p^{\epsilon})(1-s_{h}p^{\epsilon})}-d_{u}(\delta-1)\right),
\end{align*}
which is equivalent to

\begin{equation}\label{equation3}
\partial_{t}p^{\epsilon} - \Delta p^{\epsilon} - 2\frac{\nabla K}{K}\nabla p^{\epsilon}-2\nabla p^{\epsilon} \nabla \ln(1-\epsilon z^{\epsilon}) = f(p^{\epsilon}) + \frac{\Delta K}{K} g(p^{\epsilon}),
\end{equation}

where
 $$f(p^{\epsilon}) = p^{\epsilon}(1-p^{\epsilon})\left[\frac{d_u(\delta s_h p^{\epsilon} -s_f+1-\delta)}{(1-s_{f})p^{\epsilon}+(1-p^{\epsilon})(1-s_{h}p^{\epsilon})}\right],$$
 and $$g(p^{\epsilon}) = \frac{-p^{\epsilon}(1-p^{\epsilon})(s_{h}p^{\epsilon}-s_{f})}{(1-s_{f})p^{\epsilon}+(1-p^{\epsilon})(1-s_{h}p^{\epsilon})}.$$
Before stating the convergence theorem, we will make a remark on the property of bistability:
\begin{remkm}\label{rem:bistable}
  The function $f $ is continuous and Lipschitz continuous, with $f(0) = f(1) = f(\theta_1) = 0 $. Furthermore, $f < 0$  on the interval  $(0, \theta_1)$  and $f > 0$  on  $(\theta_1, 1)$, where  $\theta_1 = \frac{{(\delta-1) + s_f}}{{s_h\delta}} \in (0, 1)$, under condition \eqref{s_h}.
  We say that $f$ is \textit{bistable}.
\end{remkm}
 
\begin{hyp}\label{ass:init}
We assume that the initial conditions satisfy:
\begin{align}
& n^{init,\epsilon}_{i} \text{ is uniformly bounded in } L^{\infty}(\mathbb{R}^d), \quad n^{init,\epsilon}_{i} \ge 0, \label{ini_1}\\
&  n^{init,\epsilon}_{u} \text{ is uniformly bounded in } L^{\infty}(\mathbb{R}^d), \quad n^{init,\epsilon}_{u} > 0, \\
  &  p^{init,\epsilon} = \frac{n^{init,\epsilon}_{i}}{n^{init,\epsilon}_{i}+ n^{init,\epsilon}_{u}} \text{ is uniformly bounded in }  L^2(\mathbb{R}^d), \\
  & \text{ and }
p^{init,\epsilon}  \xrightarrow{\epsilon \to 0} p^{init,0}
   \quad \text{weakly in } L^2(\mathbb{R}^d). \notag\label{ini_p}
\end{align}
Additionally, we assume that
\begin{equation}\label{ini_2}
n^{init,\epsilon}_{i}+ n^{init,\epsilon}_{u} = K+\epsilon K^{\epsilon}_{0}, 
\quad \| K^{\epsilon}_{0}\|_{\infty} \le C'.
\end{equation}
\end{hyp}

We are now in position to present our first main result, namely the reduction of system \eqref{equationN}-\eqref{equationp} to the scalar equation \eqref{equation3} as $\epsilon \to 0$. This result extends the one presented in \cite{strugarek2016reduction}, where the carrying capacity is assumed to be constant.

\begin{thmm} \label{thm-conv}
  Let $ T > 0$ and assume \eqref{hyp_coeff}--\eqref{s_h} hold. Under Assumptions \ref{hypK} and \ref{ass:init}, as $\epsilon \rightarrow 0$, we have $ p^{\epsilon} := \frac{n^{\epsilon}_i}{n^{\epsilon}_i + n^{\epsilon}_u} \to p^0$ strongly in $L^2(0,T,L^2(\mathbb{R}^d))$, weakly in $ L^2(0,T,H^{1}(\mathbb{R}^d))$ where $p^0$ is the unique solution of 
  \begin{System}\label{eq_etude}
    \partial_{t}p^0 - \Delta p^0 - 2\frac{\nabla K}{K}\nabla p^0 = f(p^0) + \frac{\Delta K}{K} g(p^0),& in ~~ (0,T)\times \mathbb{R}^d,\\[2mm]
    p^0(t=0,x) = p^{init}(x),&in ~~ \mathbb{R}^d,
  \end{System}
  with
  \begin{equation}\label{def:f}
    f(p) = p(1-p)\left[\frac{d_u(\delta s_h p -s_f+1-\delta)}{(1-s_{f})p+(1-p)(1-s_{h}p)}\right],
  \end{equation} 
  and   
  \begin{equation}\label{def:g}
    g(p) = \frac{-p(1-p)(s_{h}p-s_{f})}{(1-s_{f})p+(1-p)(1-s_{h}p)}.
  \end{equation}
\end{thmm}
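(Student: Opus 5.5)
Following the strategy of \cite{strugarek2016reduction}, we obtain uniform-in-$\epsilon$ a priori estimates, extract convergent subsequences by compactness, identify the limit equation, and conclude by uniqueness for \eqref{eq_etude}.

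The first step is to show that $N^\epsilon$ stays close to $K$. We set $N^\epsilon = K(1-\epsilon z^\epsilon)$ and write the equation satisfied by $z^\epsilon$ from \eqref{eq_large fonct}. Schematically,
\begin{equation*}
\partial_t z^\epsilon - \Delta z^\epsilon - 2\frac{\nabla K}{K}\nabla z^\epsilon + \frac{\tilde F_u}{\epsilon}\, h(p^\epsilon)(1-\epsilon z^\epsilon)\, z^\epsilon = \frac{1}{\epsilon}\Big(d_u[(\delta-1)p^\epsilon+1] - \frac{\Delta K}{K}\Big)(1-\epsilon z^\epsilon),
\end{equation*}
where $h(p)=(1-s_f)p+(1-p)(1-s_hp)$. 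Under \eqref{hyp_coeff}, $h$ is bounded below by a positive constant on $[0,1]$. We also have $0\le p^\epsilon\le 1$ by the maximum principle applied to \eqref{equationp}. Since $K\in C^2\cap L^\infty$ with $K>c'$, we will need $\Delta K/K$ bounded; we take this to be implicit in Assumption \ref{hypK}, or else impose it. A comparison argument with constant sub- and supersolutions for the $z^\epsilon$ equation (or directly for $N^\epsilon$ in \eqref{eq_large fonct}), together with the initial condition \eqref{ini_2}, then gives $\|z^\epsilon\|_{L^\infty((0,T)\times\mathbb{R}^d)}\le C$ uniformly in $\epsilon$.

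The second step is to estimate $\nabla z^\epsilon$, since the drift term $2\nabla p^\epsilon\cdot\nabla\ln(1-\epsilon z^\epsilon)$ must vanish in the limit. We multiply the $z^\epsilon$ equation by $-\Delta z^\epsilon$, or by $\epsilon z^\epsilon$, and integrate. The large damping term $\frac{1}{\epsilon}h z^\epsilon$ should then yield $\epsilon\int_0^T\!\!\int|\nabla z^\epsilon|^2\le C$, possibly after localizing with the $L^2$ bound on $\nabla K$ from Assumption \ref{hypK}. This gives $\epsilon\nabla z^\epsilon\to 0$ in $L^2$. This is the main obstacle. The commutators involving $\nabla K$ and $\Delta K$, together with the fact that we only have $L^2$ (not $L^\infty$) control on $\nabla p^\epsilon$, make it delicate to bound the product $\nabla p^\epsilon\cdot\epsilon\nabla z^\epsilon/(1-\epsilon z^\epsilon)$ in $L^1$. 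I would first try to prove the stronger bound $\epsilon\|\nabla z^\epsilon\|_{L^\infty}\to 0$ via parabolic regularity, since the $z$ equation is strongly damped at rate $1/\epsilon$. If that fails, I would fall back on $\epsilon\|\nabla z^\epsilon\|_{L^2}=O(\sqrt{\epsilon})$ combined with an $L^2$ bound on $\nabla p^\epsilon$ and pass to the limit in the weak formulation.

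The third step is the compactness argument for $p^\epsilon$. We multiply \eqref{equation3} by $p^\epsilon$ and integrate. The drift terms are absorbed using $\|\nabla K/K\|_\infty$ and Young's inequality, and the reaction terms are Lipschitz in $p$ and vanish at $p=0$. Grönwall's lemma then gives a uniform bound for $p^\epsilon$ in $L^\infty(0,T;L^2)\cap L^2(0,T;H^1)$. From the equation, $\partial_t p^\epsilon$ is bounded in $L^2(0,T;H^{-1})+L^1$. The Aubin–Lions lemma then gives strong convergence in $L^2(0,T;L^2_{loc})$. To upgrade this to global $L^2$ convergence, we need tightness, which we obtain from a weighted estimate using $p^{init,\epsilon}\in L^2$ uniformly. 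Strong convergence is enough to pass to the limit in the nonlinear terms $f(p^\epsilon)$ and $g(p^\epsilon)\Delta K/K$, and weak convergence in $H^1$ handles the drift $\frac{\nabla K}{K}\nabla p^\epsilon$. Hence $p^0$ is a weak solution of \eqref{eq_etude}.

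Finally, we prove uniqueness of bounded weak solutions of \eqref{eq_etude} by an $L^2$ energy estimate on the difference of two solutions, using the Lipschitz continuity of $f$ and $g$ and Grönwall's lemma. Uniqueness implies that the whole family $p^\epsilon$ converges, not just a subsequence.
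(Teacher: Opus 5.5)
Your outline is essentially the paper's proof. Constant sub- and supersolutions give $\|z^\epsilon\|_{\infty}\le C$; the bound $\epsilon\int_0^T\!\int|\nabla z^\epsilon|^2\le C$ makes $\nabla\ln(1-\epsilon z^\epsilon)=O(\sqrt{\epsilon})$ in $L^2$; an energy estimate bounds $p^\epsilon$ in $L^2(0,T;H^1)$; then local Aubin--Lions, a cut-off tail estimate and the weak formulation identify the limit. Your uniqueness step adds the convergence of the whole family, which the paper asserts without proof.

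The ``main obstacle'' you raise in Step 2 is not one: Cauchy--Schwarz with these two $L^2$ bounds gives $\int_0^T\!\int|\chi\,\nabla p^\epsilon\cdot\nabla\ln(1-\epsilon z^\epsilon)|=O(\sqrt{\epsilon})$ directly, which is the paper's Step 4, so no $L^\infty$ control of $\epsilon\nabla z^\epsilon$ is needed. One caveat, shared with the paper: the tail estimate gives tightness only if the tails of $p^{init,\epsilon}$ are uniformly small, and a uniform $L^2$ bound does not guarantee that.
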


\subsection{Results on the blocking}\label{main-resul-3}
In this subsection, we present our main result, which addresses blocking the propagation of Wolbachia in heterogeneous environments where the environmental carrying capacity varies significantly across different regions of the spatial domain. Before presenting our main result for equation \eqref{eq_etude}, we introduce the concept of propagation barrier (referred to simply as barrier below).
For the sake of simplicity, we consider the domain to be one-dimensional and therefore study the differential equation : 
\begin{equation}\label{notre systeme}
  \partial_{t}p - \partial_{xx} p - 2\frac{K^{'}}{K}    \partial_{x} p = f(p) + \frac{K^{''}}{K} g(p),\quad t \ge 0, ~~ x \in\mathbb{R},
\end{equation}
where $f$ and $g$ are defined in \eqref{def:f} and \eqref{def:g} respectively.
To proceed, let us denote by $F$ the antiderivative of $f$ that vanishes at $0$, i.e.
\begin{equation*}
  F(x) = \int_{0}^{x} f(t)\, dt.
\end{equation*} 
Throughout this paper, we assume that
\begin{equation*}
  F(1) = \int_{0}^{1} f(t)\, dt > 0.
\end{equation*}
In the case where $K$ is constant, it is well known, since $f$ is bistable (see Remark \ref{rem:bistable}), that there exists a unique traveling wave. Its speed has the sign of $F(1)$ which is positive.
Depending on the initial conditions, the solution may converge to $1$, leading to an invasion phenomenon see e.g. \cite{Zlatos, NadinToledo} and references therein.

In this paper, we are interested in what happens when $K$ depends on $x$. To obtain a tractable problem, we assume that $K$ increases exponentially in a given region of the spatial domain and remains constant elsewhere. Specifically, for some constants $C > 0$, $L > 0$,
\begin{equation}\label{equation pour K}
K(x) = \begin{cases}
A,&\text{for } x < 0,\\
    Ae^{Cx},&\text{for } x \in [0, L],\\
Ae^{CL},&\text{for } x > L.
\end{cases}
\end{equation}
In this context, our goal is to prove the blocking of Wolbachia propagation where $K$ varies significantly across different regions of the spatial domain. To achieve this, we will show that equation \eqref{notre systeme} admits a stationary solution and satisfies the comparison principle.
The existence of a stationary wave for this problem boils down to the existence of a solution to
\begin{equation}\label{equationelliptique}
\begin{cases}
-p''-2Cp' - C^{2}g(p) = f(p),&on ~~]0,L[,\\
-p''  = f(p),&on ~~[0,L]^{c},\\
p{'}(0^-)-p{'}(0^+)= Cg(p(0)),\\
p{'}(L^-)-p{'}(L^+)= -Cg(p(L)),\\
p(-\infty)=1,~p(+\infty)=0,~p > 0.
\end{cases}
\end{equation}
In our study, stationary solutions of equation \eqref{notre systeme}, which exhibit specific behavior at infinity as described by the solutions of equation \eqref{equationelliptique}, act as barriers that prevent propagation.
\begin{defim} \label{def-barier}
  We name a $(C, L)$-barrier any solution to \eqref{equationelliptique}. For any bistable function $f$ we define the barrier set
  \begin{equation*}
    B(f) = \{ (C,L) \in (0,+\infty)^2,~ there~ exists~ a~ (C, L)-barrier\}.
  \end{equation*}
\end{defim}

The following Lemma states that if $C $ is small enough, there is no blockage.
\begin{lemma}\label{c small}
  If $ \frac{ K^{'}}{K} $ and $ \frac{K^{''}}{K} $ are sufficiently small, then there exists a sub-solution $p_A$ of equation \eqref{notre systeme} that is not blocked, i.e $p_A$ converges to $1$ as $t \to +\infty$ uniformly on any compact.
\end{lemma}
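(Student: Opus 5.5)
We construct a compactly supported stationary sub-solution of \eqref{notre systeme} that lies strictly above the threshold $\theta_1$ on a large interval, and then invoke the comparison principle.

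\textbf{Step 1: a "bump" for the homogeneous problem.} Since $F(1)>0$ and $f$ is bistable (Remark \ref{rem:bistable}), a classical phase-plane (shooting) argument applies to $-v''=f(v)$. Fix $\theta_1<a<1$ with $F(a)>0$. The solution with $v(0)=a$, $v'(0)=0$ satisfies $\tfrac12 v'^2=F(a)-F(v)$. It therefore decreases and reaches $0$ at some finite $R$ with $v'(R)=-\sqrt{2F(a)}<0$. Extending $v$ evenly and setting $v_R=\max(v,0)$ gives a compactly supported, $C^0$, piecewise $C^2$ function. It is a strict generalized sub-solution: at the two endpoints the kinks are convex, i.e. $v'$ jumps upward, and this is compatible with a sub-solution. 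To gain room, we instead solve $-v''=f(v)-\eta$ with $\eta>0$ small and keep $F_\eta(a):=F(a)-\eta a>0$. This leaves a margin $\eta$ that will absorb the perturbation. The bump $v$ has support $[-R,R]$ and depends only on $f$, $a$ and $\eta$.

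\textbf{Step 2: perturbation by the heterogeneous terms.} Write $\varepsilon:=\|K'/K\|_\infty+\|K''/K\|_\infty$. Set $p_A(x)=v(x-x_0)$, with any shift $x_0$. Then
\[
-p_A''-2\frac{K'}{K}p_A'-f(p_A)-\frac{K''}{K}g(p_A)\le -\eta+2\varepsilon\|v'\|_\infty+\varepsilon\|g\|_{L^\infty(0,1)}
\]
in the interior of the support. Since $v'$ and $g$ are bounded independently of $K$, this is $\le 0$ as soon as $\varepsilon$ is small. For the profile \eqref{equation pour K}, $K''/K$ contains Dirac masses at $0$ and $L$ of weight $\pm C$, or equivalently we have the jump conditions in \eqref{equationelliptique}. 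These are the main obstacle. Near $x=0$ the condition $p'(0^-)-p'(0^+)=Cg(p(0))$ must have the sub-solution sign, and the same holds at $L$. I would first handle the interior part by choosing $x_0$ so that the support $[x_0-R,x_0+R]$ lies entirely in $x<0$, where $K$ is constant. Then no jump appears and Step 2 is trivial there. If the lemma is meant for smooth $K$, the estimate above with $\varepsilon$ small suffices everywhere. Otherwise I would use the fact that $C$ small makes the jump $C\|g\|_\infty$ small and absorb it by a slight concave corner correction of $v$ of size $O(C)$.

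\textbf{Step 3: convergence to $1$.} Take an initial datum $p^{init}\ge p_A$. By the comparison principle, the solution $p(t)$ satisfies $p(t)\ge p_A$ for all $t$. Moreover, the solution starting from $p_A$ is nondecreasing in $t$, because $p_A$ is a stationary sub-solution, and it is bounded by $1$. Hence it converges, locally uniformly and by parabolic estimates, to a stationary solution $\bar p$ with $p_A\le \bar p\le 1$. To show $\bar p\equiv1$, we slide: for every shift $x_0$ in the region where the estimate of Step 2 holds, $v(\cdot-x_0)$ is a sub-solution lying below $\bar p$. A sliding argument à la Berestycki--Nirenberg, combined with the strong maximum principle, then gives $\bar p\ge a$ everywhere. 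When $\varepsilon$ is small, the only stationary solution with values in $[a,1]$ is $\bar p\equiv1$, since $f>0$ on $(\theta_1,1)$ dominates the small perturbation terms; this again uses $\varepsilon\|g\|_\infty$ small. This yields uniform convergence to $1$ on compacts, i.e. $p_A$ is not blocked.
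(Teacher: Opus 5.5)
Your argument is correct in the setting the paper's proof actually covers ($K\in C^2$ with $\|K'/K\|_\infty$ and $\|K''/K\|_\infty$ small), but it follows a genuinely different route.

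The paper builds a \emph{moving} sub-solution. It replaces $f$ by a weaker bistable nonlinearity $f_A\le f+\frac{K''}{K}g$ that still satisfies $\int_0^1 f_A>0$, takes the associated traveling front $\phi_A$ with speed $c_A>0$, and sets $p_A(t,x)=\phi_A(x-c_At)$. Convergence to $1$ on compacts is then built into the sub-solution, and no analysis of steady states is needed.

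You instead use a \emph{stationary}, compactly supported bump solving $-v''=f(v)-\eta$. Its margin $\eta$ is constant up to the edge of the support, so it absorbs both perturbation terms at once. The price is your Step 3: monotone convergence to a steady state, sliding the strict sub-solutions $v(\cdot-x_0)$ over all $x_0\in\mathbb{R}$, and a Liouville-type rigidity for steady states with values in $[a,1]$. The gain is a stronger conclusion: any datum lying above a compact bump invades, whereas the paper needs a datum lying above a whole front.

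The uniform margin is also a real technical advantage. The paper's computation drops $\partial_t p_A=-c_A\phi_A'$, so the front must in fact travel at a reduced speed $c<c_A-2\|K'/K\|_\infty$. Moreover, $f+\kappa g$ contains the term $\kappa s_f\,p(1-p)/\big((1-s_f)p+(1-p)(1-s_hp)\big)$, which is missing from \eqref{subonde}. Consequently the inequality $f_A\le f+\kappa g$ fails near $p=0$ when $\kappa<0$ unless the threshold of $f_A$ is raised as well.

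A few minor points.
\begin{itemize}
\item What converges to $1$ in your proof is the solution issued from the bump, not the bump itself. This is harmless, since that solution is itself a sub-solution.
\item In the Liouville step the right condition is $f(p)+\kappa g(p)\ge (c_0-c_1|\kappa|)(1-p)$ on $[a,1]$, because $f$ and $g$ both vanish at $p=1$. Smallness of $\varepsilon\|g\|_\infty$ alone does not give positivity near $1$. These linear bounds do hold, since $f'(1)<0$ under \eqref{s_h}.
\item Your remark on the piecewise $K$ of \eqref{equation pour K} is not a proof, and its sign is wrong. A Dirac mass of unfavorable sign, such as $Cg(p(0))\delta_0$ with $p(0)>s_f/s_h$, can be absorbed by a sub-solution only through a \emph{convex} corner (an upward jump of $p'$). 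A compactly supported profile must pay for that corner with extra negative curvature taken out of the margin $\eta$. The paper's proof does not treat that case either, so this does not affect the comparison.
\end{itemize}
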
\\
  
Next, for suitable values of $C$ and $L$, barriers can be constructed.

\begin{thmm}\label{thmprincipale}
  Assume that $K$ is given by \eqref{equation pour K}, that \eqref{hyp_coeff}--\eqref{s_h} hold, and that $F(1)>0$.
   Then there exist $C^*>0$ such that for any $C>C^*$ there exists a $(C,L)$ barrier for some $L>0$, in the sense of Definition \ref{def-barier}.
\end{thmm}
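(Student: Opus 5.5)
We construct the barrier by a shooting argument in the phase plane, solving \eqref{equationelliptique} piece by piece from both ends and matching in the middle.

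\emph{Left region $x<0$.} Here $-p''=f(p)$ with $p(-\infty)=1$. Then $p$ lies on the stable manifold of $(1,0)$, and the energy $\frac12 p'^2+F(p)=F(1)$ is conserved. This gives $p'(0^-)=-\sqrt{2(F(1)-F(p(0)))}$, so the trajectory is parametrized by $p_0:=p(0)\in(0,1)$.

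\emph{Right region $x>L$.} Here $-p''=f(p)$ with $p(+\infty)=0$, $p>0$. The energy is $\frac12p'^2+F(p)=0$, so $p'(L^+)=-\sqrt{-2F(p(L))}$. This requires $F(p(L))\le 0$, i.e.\ $p(L)\in(0,\theta_0]$, where $\theta_0>\theta_1$ is the unique zero of $F$ in $(0,1)$. Since $F(1)>0$, $\theta_0<1$.

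\emph{Jumps.} The jump conditions give
\[
p'(0^+)=p'(0^-)-Cg(p_0), \qquad p'(L^-)=p'(L^+)-Cg(p(L)).
\]
So the task reduces to the following: starting from $(p_0,\,p'(0^+))$ at $x=0$, flow the middle ODE $p''+2Cp'+C^2g(p)+f(p)=0$ for a time $L$. We need to land on the curve
\[
\Gamma_C=\{(q,\,-\sqrt{-2F(q)}+Cg(q)) : 0<q\le\theta_0\},
\]
while staying positive.

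\emph{Scaling the middle region for large $C$.} Set $y=Cx$ and $p(x)=P(y)$. The middle equation becomes
\[
P_{yy}+2P_y+g(P)+C^{-2}f(P)=0,
\]
which is a perturbation of the $C$-independent system $P''+2P'+g(P)=0$. The jump data scale as
\[
P_y(0^+)=C^{-1}p'(0^-)-g(p_0)\to -g(p_0), \qquad \Gamma_C\to\{(q,g(q))\}.
\]
The zeros of $g$ are $0$, $s_f/s_h$ and $1$. One has $g>0$ on $(0,s_f/s_h)$ and $g<0$ on $(s_f/s_h,1)$, and the denominator is positive. Thus $s_f/s_h$ is a stable-focus or node equilibrium of the limit system, with damping $2$. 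I would choose $p_0$ near $1$ and follow the limit trajectory with initial slope $-g(p_0)>0$ as $y$ increases. Because of the damping, it decreases toward $s_f/s_h$.

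The key point is to check whether $s_f/s_h<\theta_0$, using the relation between $\theta_1$, $s_f$ and $\delta$. One needs the limit trajectory to cross the limit curve $\{P'=g(P)\}$ at some finite $y^*$ with $P(y^*)\in(0,\theta_0)$.

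\emph{Closing the argument.} A continuity or intermediate-value argument in the shooting parameters $(p_0,L)$ then produces, for $C$ large, an exact intersection with $\Gamma_C$. Persistence is justified by transversality of the crossing and continuous dependence of the flow on $C^{-1}$. We then set $L=y^*/C$. Positivity holds on each piece: on the right it follows from the explicit homoclinic-type branch, and on the left from monotonicity.

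\emph{Main obstacle.} The hardest step is guaranteeing the crossing in the limit system with $P(y^*)$ in the admissible range $(0,\theta_0]$. If $s_f/s_h\ge\theta_0$, the limit trajectory may never enter the admissible region. In that case I would instead exploit the $C^{-2}f$ term together with a signed-area or Lyapunov function argument, using
\[
E=\tfrac12P_y^2+G(P),\qquad G'=g,
\]
which decreases along trajectories. Comparing $E$ at the entry and exit curves should yield the existence of a threshold $C^*$, beyond which a suitable $L$ can be selected by continuity.
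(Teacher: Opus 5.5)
There is a genuine gap, and it sits in the step you call the main obstacle. The existence of the crossing is the whole content of the theorem, and the limit dynamics you rely on for it are wrong.

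Since $g>0$ on $(0,s_f/s_h)$ and $g<0$ on $(s_f/s_h,1)$, we have $g'(s_f/s_h)<0$. The linearization $\lambda^2+2\lambda+g'(s_f/s_h)=0$ of $P''+2P'+g(P)=0$ therefore has roots of opposite signs. So $s_f/s_h$ is a saddle (a maximum of the potential $G$), not a stable node or focus. The attractors are $0$ and $1$, where $g'>0$. Consequences:
\begin{itemize}
\item Your limit orbit, started at $p_0$ near $1$ with slope $-g(p_0)>0$, moves up, stays near the attractor $1$ and converges to it.
\item More generally, for any $p_0\in(s_f/s_h,1)$ the orbit cannot turn back while $P\in(s_f/s_h,1)$, because $P_y=0$ there forces $P_{yy}=-g(P)>0$.
\item So for large $C$ your shooting never reaches $(0,\theta_0]$ and produces no intersection.
\end{itemize}
There is also a sign error. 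From $p'(L^-)=p'(L^+)-Cg(p(L))$, which you state correctly, the exit curve is $\{(q,-\sqrt{-2F(q)}-Cg(q))\}$. Its rescaled limit is $\{P_y=-g(P)\}$, the same curve as the entry curve, not $\{P_y=g(P)\}$. Your worry that possibly $s_f/s_h\ge\theta_0$ is unfounded: $s_f\delta<\delta-1+s_f$ gives $s_f/s_h<\theta_1<\theta_0$. Finally, the fallback via $E=\frac12P_y^2+G(P)$ compares no concrete quantities and produces no threshold $C^*$.

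The missing idea is to shoot from $p(0)=\beta=s_f/s_h$. Then $g(\beta)=0$, so there is no jump at $x=0$. The orbit starts next to the saddle with slope $-\sqrt{2(F(1)-F(\beta))}<0$ and runs down into $(0,s_f/s_h)$, where $f<0<g$. The paper proceeds as follows:
\begin{itemize}
\item Set $w(p)=\frac12p'(X(p))^2$, with $X$ the inverse of the decreasing solution; it solves $w'=2C\sqrt{2w}-f-C^2g$.
\item With $g_\infty=\max_{(0,s_f/s_h)}g$, the function $-F(p)+\frac{C^2}{8}g_\infty^2$ is a supersolution as soon as $C^2g_\infty^2\ge 8F(1)$.
\item At $\alpha=\argmax g$ this gives $w(\alpha)\le -F(\alpha)+\frac{C^2}{8}g(\alpha)^2<\frac12(\sqrt{-2F(\alpha)}+Cg(\alpha))^2$.
\item At $\beta$ the target equals $-F(\beta)$, and $w(\beta)-(-F(\beta))=F(1)>0$.
\item The intermediate value theorem yields $\alpha$. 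Then $L=\int_\alpha^\beta dp/\sqrt{2w}$, and $C^*\le 2\sqrt{2F(1)}/g_\infty$.
\end{itemize}
Your rescaling can be repaired along the same lines by starting from $p_0=s_f/s_h$. The initial value $|P_y|=C^{-1}\sqrt{2(F(1)-F(p_0))}$ exceeds the target $C^{-1}\sqrt{-2F(p_0)}$ because $F(1)>0$. The orbit then aligns with the unstable direction of the saddle, whose slope $\sqrt{1+|g'(s_f/s_h)|}-1$ is less than $|g'(s_f/s_h)|$. Hence $|P_y|$ falls below $g(P)$, and a crossing occurs within $O(1/C)$ of $s_f/s_h$. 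As written, however, with $p_0$ near $1$ and $s_f/s_h$ treated as a sink, the argument does not go through.
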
\\

We will also show later that if a $(C_1,L_1)$-barrier exists, then barriers also exist for any larger length $L_2>L_1$, provided that $C_2$ is chosen according to the relation $C_2= C_1\frac{L_1}{L_2}$.

The existence of a barrier significantly and directly influences the asymptotic behavior of the solutions of equation \eqref{notre systeme}, as established by the following proposition.
\begin{propm}\label{principe de comparaison}
Suppose that there exists a barrier $(C, L)$ denoted by $\overline{P}$ and that $ K $ is given by \eqref{equation pour K}. Then every weak solution of
\begin{equation*}
\partial_{t}p - \partial_{xx} p - 2\frac{ K^{'}}{K}\partial_{x} p = f(p) + \frac{K^{''}}{K} g(p),
\end{equation*}
with initial data such that $p^{ini} \le \overline{P}$, verifies $\forall t > 0$, $p(t) \le \overline{P}$, then it stops spreading.
\end{propm}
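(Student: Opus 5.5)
The plan is to prove Proposition~\ref{principe de comparaison} with a parabolic comparison principle. The barrier $\overline{P}$ is a time-independent solution of \eqref{notre systeme}, understood in the weak sense: the jump conditions in \eqref{equationelliptique} are exactly what the distributional second derivative of $K$ produces at $x=0$ and $x=L$. I then compare an arbitrary weak solution $p$ with $\overline{P}$.

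\textbf{Divergence form.} I would first rewrite the operator in divergence form, so that the discontinuities of $K'$ are harmless. Since $\partial_{xx}p+2\frac{K'}{K}\partial_x p=\frac{1}{K^2}\partial_x(K^2\partial_x p)$, multiplying \eqref{notre systeme} by $K^2$ gives
\begin{equation*}
K^2\partial_t p-\partial_x(K^2\partial_x p)=K^2 f(p)+KK''g(p).
\end{equation*}
Here $K''$ is a measure: it equals $C^2K$ on $(0,L)$, plus Dirac masses of weight $CA$ at $0$ and $-CAe^{CL}$ at $L$. Testing $-\overline{P}''-2\frac{K'}{K}\overline{P}'=f(\overline{P})+\frac{K''}{K}g(\overline{P})$ against smooth functions and integrating by parts on each interval reproduces the interior equations in \eqref{equationelliptique}. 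Up to the sign convention used there, it also reproduces the two jump relations $\overline{P}'(0^-)-\overline{P}'(0^+)=Cg(\overline{P}(0))$ and the analogous one at $L$. So $\overline{P}$ is a weak stationary solution, hence in particular a weak supersolution.

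\textbf{Energy estimate.} Set $w=p-\overline{P}$ and test the difference of the two weak formulations with $w^+=(p-\overline{P})^+$, which is admissible since both functions have $H^1_{loc}$ regularity in $x$. Using $\int K^2|\partial_x w^+|^2\ge 0$, this yields
\begin{equation*}
\frac12\frac{d}{dt}\int K^2 (w^+)^2\,dx\le \int K^2\big(f(p)-f(\overline{P})\big)w^+\,dx+\int_0^L C^2K^2\big(g(p)-g(\overline{P})\big)w^+\,dx+\sum_{x_0\in\{0,L\}}\pm CK(x_0)\big(g(p)-g(\overline{P})\big)w^+(x_0).
\end{equation*}
The first two terms are bounded by $\mathrm{Lip}(f)\int K^2(w^+)^2$ and $C^2\mathrm{Lip}(g)\int K^2(w^+)^2$. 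For this I use that $f$ and $g$ are Lipschitz on $[0,1]$, because the denominator $(1-s_f)p+(1-p)(1-s_hp)$ is bounded below by a positive constant there. I also use that $0\le p\le 1$ by the maximum principle, since $0$ and $1$ are solutions.

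\textbf{Main obstacle: the point terms.} The point terms are where the real difficulty lies. I would control them with the one-dimensional trace inequality $|v(x_0)|^2\le \eta\|\partial_x v\|_{L^2}^2+C_\eta\|v\|_{L^2}^2$, applied to $v=w^+$. Instead of discarding the gradient term, I keep part of it and absorb $\eta\int K^2|\partial_x w^+|^2$ into it, using that $K$ is bounded above and below. This leads to
\begin{equation*}
\frac{d}{dt}\int K^2(w^+)^2\le M\int K^2(w^+)^2.
\end{equation*}
Since $w^+(0)=0$, Gronwall's lemma gives $w^+\equiv 0$, that is, $p(t)\le\overline{P}$ for all $t>0$. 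Working with weak solutions also requires integrability at infinity; I would handle this with a cutoff weight $e^{-\varepsilon|x|}$ or by noting $w^+\in L^2$ uniformly.

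\textbf{Conclusion.} Since $\overline{P}(+\infty)=0$ and $\overline{P}<1$ for large $x$, $p$ cannot converge to $1$ locally uniformly on $\mathbb{R}$. Indeed, $\limsup_t p(t,x)\le\overline{P}(x)\to0$ as $x\to+\infty$, so the invasion is blocked.
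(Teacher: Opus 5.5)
Your proof is correct and rests on the same idea as the paper, whose entire proof is the one-line remark that the claim follows from the maximum principle because $\overline{P}$ is a stationary solution with $p^{ini}\le\overline{P}$. What you add is the justification the paper skips for this non-$C^2$ profile of $K$. You identify the jump conditions of \eqref{equationelliptique} with the Dirac masses of $K''$ in the divergence form $K^2\partial_t p-\partial_x(K^2\partial_x p)=K^2f(p)+KK''g(p)$. You then absorb the resulting point terms via the one-dimensional trace inequality before applying Gronwall; these terms carry the factor $CK(x_0)^2$ rather than $CK(x_0)$, a harmless slip.
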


\section{Reduction}\label{section3}

Let us first recall the compactness result (see \cite{simon1986compact}).
\begin{lemm}[Lions-Aubin] \label{AubinLions} 
  Let $T > 0$, $q \in (1,\infty)$, $(\psi_{n})_{n}$ a bounded sequence in $L^q(0,T,H)$, where $H$ is a banach space. If $\psi_{n}$ is bounded in $L^q(0,T,V)$ and $V$ compactly embeds in $H$, and if $(\partial_{t}\psi_{n})_{n}$ is bounded in $L^q(0,T,V')$ uniformly with respect to $n$, then $(\psi_{n})_{n}$ is relatively compact in $L^q(0,T,H)$.
\end{lemm}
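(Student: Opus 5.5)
The statement immediately above is the Lions–Aubin compactness lemma (Lemma \ref{AubinLions}), quoted from \cite{simon1986compact}. I will assume the standard setting it implicitly requires: a triple $V \subset H \subset V'$ with $V$ reflexive, $V$ compactly embedded in $H$, and $H$ continuously embedded in $V'$. The plan is the classical two-step argument. First comes an interpolation (Ehrling) inequality, which turns spatial compactness into control by a weaker norm. Second comes a time-translation estimate, which supplies equicontinuity in time. The two are then combined through the Fréchet–Kolmogorov/Riesz criterion for compactness in $L^q(0,T;H)$, or equivalently Simon's criterion.

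\textbf{Step 1 (Ehrling's lemma).} I will show that for every $\eta>0$ there is $C_\eta$ with
\[
\|v\|_H \le \eta \|v\|_V + C_\eta \|v\|_{V'} \quad \text{for all } v\in V .
\]
The proof is by contradiction. If the inequality fails, there is a sequence $v_k$ with $\|v_k\|_V=1$ and $\|v_k\|_H > \eta + k\|v_k\|_{V'}$. By compactness of $V\hookrightarrow H$, a subsequence converges in $H$. Since $\|v_k\|_{V'}<1/k$ (because $\|v_k\|_H$ is bounded), the limit is zero in $V'$, hence zero in $H$ by injectivity of $H\hookrightarrow V'$. This contradicts $\|v_k\|_H>\eta$.

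\textbf{Step 2 (time translates in $V'$).} Since $\partial_t\psi_n$ is bounded in $L^q(0,T;V')$, I write $\psi_n(t+h)-\psi_n(t)=\int_t^{t+h}\partial_t\psi_n$. Hölder's inequality then gives
\[
\|\tau_h\psi_n-\psi_n\|_{L^q(0,T-h;V')}\le h\,\|\partial_t\psi_n\|_{L^q(0,T;V')}\le Ch ,
\]
uniformly in $n$. Applying Step 1 pointwise in $t$ and integrating yields
\[
\|\tau_h\psi_n-\psi_n\|_{L^q(H)}\le 2\eta\sup_n\|\psi_n\|_{L^q(V)}+C_\eta C h .
\]
Hence these translates tend to $0$ uniformly in $n$ as $h\to0$.

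\textbf{Step 3 (conclusion).} Simon's theorem requires two things: uniform smallness of the time translates in $L^q(0,T;H)$, and relative compactness in $H$ of the averages $\int_{t_1}^{t_2}\psi_n\,dt$. The first is Step 2. The second holds because these averages are bounded in $V$ (by boundedness in $L^q(V)$ and Hölder), and $V$ is compactly embedded in $H$. Together these give relative compactness of $(\psi_n)$ in $L^q(0,T;H)$.

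\textbf{Alternative route.} If I prefer to avoid Simon's criterion, I would mollify in time. I would show that $\psi_n*\rho_\varepsilon$ is relatively compact in $C([0,T];H)$ by Arzelà–Ascoli, using the $V$-bounds and the compact embedding. By Step 2, $\psi_n*\rho_\varepsilon$ is uniformly close to $\psi_n$ in $L^q(H)$, and a diagonal argument then finishes the proof.

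\textbf{Main obstacle.} The delicate point is Step 3, that is, passing from the uniform translate estimate to compactness in the Bochner space. This is where the Fréchet–Kolmogorov-type argument for vector-valued functions is genuinely needed, including the treatment of translates near the endpoint $T$. Here I would rely directly on \cite{simon1986compact} rather than redo the proof.
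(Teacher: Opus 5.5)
Your proposal is correct and is essentially the argument of \cite{simon1986compact}, which the paper only cites without proof: you combine Ehrling's interpolation inequality with the $V'$-translate bound $\|\tau_h\psi_n-\psi_n\|_{L^q(0,T-h;V')}\le h\,\|\partial_t\psi_n\|_{L^q(0,T;V')}$, then conclude with Simon's characterization of relatively compact sets in $L^q(0,T;H)$ through time averages and translates. Drop the reflexivity of $V$: you never use it, and it would exclude the paper's own application with $V_R=H^1(B_R)\cap L^\infty(B_R)$, which is not reflexive for $d\ge 2$. All you need is that $V\hookrightarrow H$ is compact and $H\hookrightarrow V'$ is continuous and injective; also, in Step 1 the bound should read $\|v_k\|_{V'}<c/k$, where $c$ is the norm of the embedding $V\hookrightarrow H$.
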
\\

\textbf{Proof of Theorem \ref{thm-conv} :} We split the proof of the theorem into several steps.\\[2mm]
\textbf{Step 1 : Uniform a priori estimates.} We initially establish  some uniform bounds with respect to $\epsilon > 0$.\\
Before starting with the first lemma which proves that $z^{\epsilon}$ is uniformly bounded with respect to $\epsilon$, we introduce the following system of equations, which is used to prove this property.
We have  $N^{\epsilon} = K(x)(1-\epsilon z^{\epsilon}(t,x))$ and  $p^{\epsilon} = \frac{n^{\epsilon}_i}{n^{\epsilon}_i + n^{\epsilon}_u}$. \\ Let us replace $N^{\epsilon}$ with its expression in equation \eqref{eq_large fonct}, and then, from straightforward computations, we get
\begin{align}\label{eq-zepsilon}
  & \partial_{t}z^{\epsilon}-\Delta z^{\epsilon}-2\frac{\nabla K}{K} \nabla  z^{\epsilon} = \\
  & \quad -\frac{(1-\epsilon z^{\epsilon})}{\epsilon}\left[\frac{\tilde{F_{u}}}{\epsilon}(1-\frac{N^\epsilon}{K})((1-s_{f})p^{\epsilon}+(1-p^{\epsilon})(1-s_hp^{\epsilon}))-d_{u}(\delta p^{\epsilon}+1-p^{\epsilon}) \right]  \notag \\
  & \quad\quad -\frac{(1-\epsilon z^{\epsilon})}{\epsilon}\frac{\Delta K}{K}\notag.\end{align}

\begin{lemm}
Assume that the assumptions of Theorem \ref{thm-conv} hold, and let $(z^{\epsilon},p^{\epsilon})$ be the unique solution of system \eqref{eq-zepsilon}-\eqref{equationp}. Then 
    $z^{\epsilon}$  is uniformly bounded with respect to $\epsilon$, and $0 \le p^{\epsilon} \le 1$ on  $[0,T]\times\mathbb{R}^d$.
\end{lemm}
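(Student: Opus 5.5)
The bound $0\le p^\epsilon\le 1$ is the easier half, so I will do it first. Nonnegativity of $n_i^\epsilon$ and positivity of $n_u^\epsilon$ follow from the parabolic maximum principle applied to \eqref{2.1}--\eqref{2.2}. Each equation has the form $\partial_t n-\Delta n=c(t,x)\,n$ with a coefficient $c$ that is locally bounded on $[0,T]$, and the initial data are nonnegative, respectively positive. Hence $n_i^\epsilon\ge0$ and $n_u^\epsilon>0$, so $p^\epsilon=n_i^\epsilon/(n_i^\epsilon+n_u^\epsilon)\in[0,1]$ is well defined. Equivalently, in \eqref{equationp} the constants $0$ and $1$ are solutions, because the right-hand side carries the factor $p(1-p)$. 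Since $p^{init,\epsilon}\in[0,1]$, the comparison principle for this scalar parabolic equation, with the bounded drift $2\nabla N^\epsilon/N^\epsilon$ treated as given, keeps $p^\epsilon$ between them.

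For $z^\epsilon$, I will rewrite the right-hand side of \eqref{eq-zepsilon} in terms of $z^\epsilon$. Using $1-N^\epsilon/K=\epsilon z^\epsilon$ and writing $h(p)=(1-s_f)p+(1-p)(1-s_hp)$ and $m(p)=d_u(\delta p+1-p)$, the right-hand side becomes
\begin{equation*}
-\frac{1-\epsilon z^\epsilon}{\epsilon}\Big[\tilde F_u h(p^\epsilon) z^\epsilon - m(p^\epsilon)+\frac{\Delta K}{K}\Big].
\end{equation*}
Since $p^\epsilon\in[0,1]$, we have $h(p^\epsilon)\ge h_0:=\min_{[0,1]}h>0$. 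Indeed $h$ is positive on $[0,1]$: $h(0)=1$, $h(1)=1-s_f$, and $h$ is a convex-combination-type quantity bounded below. We also have $0<m(p^\epsilon)\le \delta d_u$. By Assumption \ref{hypK}, $|\Delta K/K|\le M$, where $M$ uses $K\ge c'$ and $K\in C^2$ with bounded second derivatives (if needed, I add $\Delta K\in L^\infty$ as implicit in the $C^2$ hypothesis). I then define constants $\overline z=(\delta d_u+M)/(\tilde F_u h_0)$ and $\underline z=-M/(\tilde F_u h_0)$, enlarged if necessary so that $\underline z\le -C'/\min K\cdot$(scale) $\le z^\epsilon(0)\le \overline z$. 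This is possible because \eqref{ini_2} gives $z^\epsilon(0)=-K_0^\epsilon/K$, which is bounded by $C'/c'$.

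Next I check that $\overline z$ is a supersolution and $\underline z$ a subsolution of \eqref{eq-zepsilon} as long as $1-\epsilon z>0$. This condition holds for $\epsilon<1/(2\overline z)$, and $N^\epsilon>0$ anyway. For a constant $Z$ the left-hand side vanishes. At $z=\overline z$ the bracket is $\ge0$, and it is multiplied by $-(1-\epsilon\overline z)/\epsilon<0$, so the right-hand side is $\le0$, which is the supersolution inequality. The subsolution case is symmetric, since the bracket is $\le0$ at $\underline z$. The comparison principle for \eqref{eq-zepsilon} then gives $\underline z\le z^\epsilon\le\overline z$ on $[0,T]\times\mathbb R^d$, uniformly in $\epsilon$.

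The main obstacle is justifying that comparison, because the equation is nonlinear in $z$ with a stiff $1/\epsilon$ factor, and the coefficient $p^\epsilon$ is itself coupled to $z^\epsilon$. I would handle it by freezing $p^\epsilon$ as a given function with values in $[0,1]$, obtained from the first step, which needs only the positivity of $n_i^\epsilon,n_u^\epsilon$. The reaction term $R(t,x,z)$ is then locally Lipschitz in $z$ on the range $\{1-\epsilon z>0\}$. I then apply the standard argument to $(z^\epsilon-\overline z)_+$. Multiply the difference equation by $(z^\epsilon-\overline z)_+$ and integrate, using the fact that the drift $2\nabla K/K$ is bounded so that $\mathrm{div}$-type terms are absorbed by Young's inequality. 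Because $R(\cdot,z)-R(\cdot,\overline z)$ has the favorable sign, the argument closes with Gronwall's inequality. To have the integrals finite, I would either work with $(z-\overline z)_+$ weighted by $e^{-|x|}$, or invoke the maximum principle for bounded solutions in $\mathbb R^d$ (Phragmén–Lindelöf type), since $z^\epsilon$ is bounded for each fixed $\epsilon$ by standard well-posedness.
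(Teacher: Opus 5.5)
Your proposal is correct and follows the same route as the paper. Nonnegativity of $n_i^\epsilon$ and positivity of $n_u^\epsilon$ give $0\le p^\epsilon\le 1$; then the $\epsilon$-independent constants $\overline z$, $\underline z$ (enlarged to cover the initial bound $|z^\epsilon(0)|\le C'/c'$ from \eqref{ini_2}) are a super- and a subsolution of \eqref{eq-zepsilon} once $1-\epsilon\overline z>0$, and comparison gives the uniform bound. Your constants are cruder than the paper's $\max_p$/$\min_p$ expressions but serve the same purpose, and you rightly note that boundedness of $\Delta K/K$ is needed, which the paper also uses implicitly through $\min\frac{\Delta K}{K}$ and $\max\frac{\Delta K}{K}$.
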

\begin{proof}
  By nonnegativity of $n^{\text{init},\epsilon}_i$ and $n^{\text{init},\epsilon}_u$, it is standard to deduce the nonnegativity of $n^{\epsilon}_i$ and $n^{\epsilon}_u$ (Indeed $0$ is a subsolution for \eqref{2.1}--\eqref{2.2}). 
  Moreover, by the strong maximum principle we have $n^{\epsilon}_u > 0$ on $(0,T) \times\mathbb{R}^d $. 
Therefore, $p^{\epsilon}$ is well-defined on $[0, T] \times \mathbb{R}^d$ and satisfies by definition $0 \leq p^{\epsilon} \leq 1$ on $[0, T] \times \mathbb{R}^d$.

Consider the function $h$ defined as  $h(p) = \frac{d_{u}(\delta p+1-p)}{\tilde{F_{u}}((1-s_{f})p+(1-p)(1-s_hp))}$. We remark that the denominator is positive, since $p \in [0,1]$.
Let
\begin{equation*}
  \tilde{K} = \max\{ \max_{p \in [0,1]} (h(p)-\frac{\min\frac{\Delta K}{K}}{\tilde{F_{u}}((1-s_{f})p+(1-p)(1-s_hp))}), \| z^{init,\epsilon}\|_{\infty}\}.
\end{equation*}
Let $\epsilon_0$ be such that $ \tilde{K} \le \frac{1}{\epsilon_0}$( $\Leftrightarrow  1-\epsilon_0 \tilde{K} \ge 0$), then, thanks to this choice  we have, for $0 < \epsilon \le \epsilon_0 $,
\begin{align*}
  & -\frac{(1-\epsilon \tilde{K})}{\epsilon}\left(\tilde{F_{u}}\tilde{K}((1-s_{f})p^{\epsilon}+(1-p^{\epsilon})(1-s_hp^{\epsilon}))-d_{u}(\delta p^{\epsilon}+1-p^{\epsilon}) + \frac{\Delta K}{K}\right)  \\
  & = -\frac{(1-\epsilon \tilde{K})}{\epsilon}\left(\tilde{F_{u}}[((1-s_{f})p^{\epsilon}+(1-p^{\epsilon})(1-s_hp^{\epsilon}))][\tilde{K}-h(p^{\epsilon})]+\frac{\Delta K}{K}\right).
\end{align*}
According to the definition of $ \tilde{K}$, we have
\begin{align*}
  & -\frac{(1-\epsilon \tilde{K})}{\epsilon}(\tilde{F_{u}}[((1-s_{f})p^{\epsilon}+(1-p^{\epsilon})(1-s_hp^{\epsilon}))][\tilde{K}-h(p^{\epsilon})]+\frac{\Delta K}{K})  \\
  & \qquad \le 0 = \partial_{t}\tilde{K}-\Delta \tilde{K}-2\frac{\nabla K}{K} \nabla \tilde{K}.
\end{align*} 
Hence, $\tilde{K}$ is a super-solution for \eqref{eq-zepsilon} for any $0 < \epsilon \le \epsilon_0 $. Then  $z^{\epsilon} \le \tilde{K}$ for any $0 < \epsilon \le \epsilon_0 $.\\ By the same token, we have that the negative constant 
\begin{equation*}
\underset{\sim}{K} = \min\{ \min_{p \in [0,1]} (h(p) - \frac{\max\frac{\Delta K}{K}}{\tilde{F}_{u}\big((1-s_{f})p+(1-p)(1-s_h p))}), -\|z^{\text{init},\epsilon}\|_{\infty}\} \end{equation*}
is a sub-solution for \eqref{eq-zepsilon}. Indeed, Let $\epsilon_0$ be such that $ \underset{\sim}{K} \le \frac{1}{\epsilon_0}$ ($\Leftrightarrow  1-\epsilon_0 \underset{\sim}{K} \ge 0$), then, thanks to this choice  we have for  $0 < \epsilon \le \epsilon_0$:
\begin{align*}
& -\frac{(1-\epsilon \underset{\sim}{K})}{\epsilon}\left[
\tilde{F}_{u} \underset{\sim}{K} 
\big((1-s_{f})p^{\epsilon} + (1-p^{\epsilon})(1-s_h p^{\epsilon})\big) 
- d_{u}(\delta p^{\epsilon} + 1 - p^{\epsilon}) + \frac{\Delta K}{K}\right]  \\
& = -\frac{(1-\epsilon \underset{\sim}{K})}{\epsilon}
\bigg(\tilde{F}_{u} \big[(1-s_{f})p^{\epsilon} + (1-p^{\epsilon})(1-s_h p^{\epsilon})\big] 
\big[\underset{\sim}{K} - h(p^{\epsilon})\big] 
+ \frac{\Delta K}{K}\bigg).
\end{align*}
According to the definition of $\underset{\sim}{K}$, we have:
\begin{align*}
& -\frac{(1-\epsilon \underset{\sim}{K})}{\epsilon}
\bigg(\tilde{F}_{u} \big[(1-s_{f})p^{\epsilon} + (1-p^{\epsilon})(1-s_h p^{\epsilon})\big]
\big[\underset{\sim}{K} - h(p^{\epsilon})\big]
                 + \frac{\Delta K}{K}\bigg) \\
& \qquad \geq 0 = \partial_{t}\underset{\sim}{K} - \Delta \underset{\sim}{K} 
- 2\frac{\nabla K}{K} \nabla\underset{\sim}{K}.
\end{align*}

Hence, we conclude that $\underset{\sim}{K}$ is a sub-solution for \eqref{eq-zepsilon}. Thus, $z^{\epsilon}$ is uniformly bounded from below by $\underset{\sim}{K}$. We therefore deduce the uniform bound of $z^{\epsilon}$.
 \end{proof}
\begin{lemm}\label{lemma estimate}
  We assume that Assumptions \ref{hypK} holds. Let $(z^{\epsilon},p^{\epsilon})$ represent the unique solution of the equations \eqref{eq-zepsilon}-\eqref{equationp}. For any sufficiently small $\epsilon > 0$, the following uniform estimate holds~:
   \begin{equation}\label{estimation-z}
      \epsilon \int_{0}^{T}\int_{\mathbb{R}^d}|\nabla z^{\epsilon} |^2dxdt \le S_{1},
   \end{equation}
   and
   \begin{equation}\label{estimation-grad p}
       \int_{0}^{T}\int_{\mathbb{R}^d}|\nabla p^{\epsilon}|^2dxdt \le S_{2},
     \end{equation}
     for some positive constant $S_{k}$, $k = 1,2$.
\end{lemm}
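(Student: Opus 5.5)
We argue by energy estimates on the two equations, following the constant-$K$ case of \cite{strugarek2016reduction}. The only new terms come from $\nabla K/K$ and $\Delta K/K$, and Assumptions \ref{hypK} are exactly what is needed to absorb them. Throughout we use the previous lemma: $z^{\epsilon}$ is bounded in $L^\infty$ uniformly in $\epsilon$, and $0\le p^{\epsilon}\le 1$. Consequently, for $\epsilon$ small, $N^{\epsilon}=K(1-\epsilon z^{\epsilon})$ lies between two positive constants, using $K>c'$ and $K\in L^\infty$.

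\textbf{Estimate \eqref{estimation-z}.} Rewrite \eqref{eq-zepsilon} with the bracket expressed through $z^{\epsilon}$, since $1-N^{\epsilon}/K=\epsilon z^{\epsilon}$. Writing $a(p)=(1-s_f)p+(1-p)(1-s_hp)>0$, the right-hand side equals
\[
-\frac{1-\epsilon z^{\epsilon}}{\epsilon}\Big(\tilde F_u a(p^{\epsilon}) z^{\epsilon}-d_u(\delta p^{\epsilon}+1-p^{\epsilon})+\tfrac{\Delta K}{K}\Big)=:-\tfrac1\epsilon R^{\epsilon}.
\]
Here $R^{\epsilon}$ is bounded in $L^\infty$. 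Since $z^{\epsilon}$ need not be integrable on $\mathbb R^d$, we would not test with $z^{\epsilon}$ itself. Instead we multiply by $K^2 z^{\epsilon}$, which turns $-\Delta-2\frac{\nabla K}{K}\nabla$ into the divergence form $-K^{-2}\nabla\cdot(K^2\nabla\cdot)$, and we localize with a cutoff or work with $\partial_t$-differences if needed. We would first try to work directly with $w^{\epsilon}=z^{\epsilon}-z^{\epsilon}_{eq}$, where $z^{\epsilon}_{eq}=h(p^{\epsilon})-\frac{\Delta K}{K\tilde F_u a(p^{\epsilon})}$ is the expected limit. The natural quantity is then $\epsilon\frac{d}{dt}\int K^2|w^{\epsilon}|^2+2\epsilon\int K^2|\nabla z^{\epsilon}|^2+2\int K^2 \tilde F_u a |w^{\epsilon}|^2(1-\epsilon z^{\epsilon})$. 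The cross terms involve $\nabla z^{\epsilon}\cdot\nabla z^{\epsilon}_{eq}$, which contains $\nabla p^{\epsilon}$ and $\nabla(\Delta K/K)$. They are handled by Young's inequality, and their contribution is $\epsilon\|\nabla p^{\epsilon}\|_{L^2}^2+\epsilon C$, provided the $K$-terms are integrable. This uses $\nabla K\in L^2$, $\Delta K\in L^1\cap L^\infty$, and, if necessary, $|\Delta K|^2\le\|\Delta K\|_\infty|\Delta K|$. Integrating in time and using \eqref{ini_2} for the initial term, we get
\[
\epsilon\int_0^T\!\!\int|\nabla z^{\epsilon}|^2\le C+C\epsilon\int_0^T\!\!\int|\nabla p^{\epsilon}|^2 .
\]

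\textbf{Estimate \eqref{estimation-grad p}.} Multiply \eqref{equationp} by $(N^{\epsilon})^2p^{\epsilon}$; note that $\partial_t p-\Delta p-2\frac{\nabla N}{N}\nabla p=\partial_t p-N^{-2}\nabla\cdot(N^2\nabla p)$. This gives
\[
\frac12\frac{d}{dt}\int (N^{\epsilon})^2|p^{\epsilon}|^2+\int (N^{\epsilon})^2|\nabla p^{\epsilon}|^2=\int N^{\epsilon}\partial_tN^{\epsilon}|p^{\epsilon}|^2+\int (N^{\epsilon})^2p^{\epsilon}\,\mathrm{RHS}.
\]
The right-hand side of \eqref{equationp} is $p(1-p)\big(-\tilde F_u z^{\epsilon}(s_hp-s_f)+d_u(1-\delta)\big)$. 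It is bounded by $C p^{\epsilon}$, so that term is at most $C\int|p^{\epsilon}|^2$. The term $N\partial_t N=-\epsilon K^2(1-\epsilon z)\partial_t z^{\epsilon}$ is the delicate one. Using \eqref{eq-zepsilon}, $\epsilon\partial_t z^{\epsilon}=\epsilon\Delta z^{\epsilon}+2\epsilon\frac{\nabla K}{K}\nabla z^{\epsilon}-R^{\epsilon}(1-\epsilon z^{\epsilon})$. The $R^{\epsilon}$ part is bounded, giving $C\int|p^{\epsilon}|^2$. The $\epsilon\Delta z^{\epsilon}$ part is integrated by parts onto $|p^{\epsilon}|^2K^2(1-\epsilon z^{\epsilon})$. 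This produces $\epsilon\int|\nabla z^{\epsilon}||p^{\epsilon}||\nabla p^{\epsilon}|$, which is at most $\frac14\int|\nabla p^{\epsilon}|^2+C\epsilon^2\int|\nabla z^{\epsilon}|^2$ because $|p^{\epsilon}|\le1$. It also produces terms like $\epsilon\int|\nabla K||\nabla z^{\epsilon}||p^{\epsilon}|^2$ and $\epsilon^2\int|\nabla z^{\epsilon}|^2|p^{\epsilon}|^2$, handled the same way.

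\textbf{Closing the argument.} Gronwall's inequality, with $\|p^{init,\epsilon}\|_{L^2}$ bounded, gives $\|\nabla p^{\epsilon}\|_{L^2L^2}^2\le C+C\epsilon\cdot\epsilon\|\nabla z^{\epsilon}\|^2$. Combining this with the first bound and taking $\epsilon$ small closes both estimates. The main obstacle is the first step: finding a test function that makes the $z^{\epsilon}$ energy finite on the whole space. The constant part of $z^{\epsilon}$ is not in $L^2$, so one must subtract $z^{\epsilon}_{eq}$ or use $\partial_t z^{\epsilon}$ as in \cite{strugarek2016reduction}. One must also check that every term generated by $K$ only involves $\nabla K\in L^2$ or $\Delta K\in L^1$.
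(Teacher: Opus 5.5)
The gap is in your first step, and it is more than a technicality.

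\textbf{The initial energy is not finite, and the estimate itself fails.} Your energy for $w^\epsilon$ starts from $\epsilon\int K^2|w^\epsilon(0)|^2$, and \eqref{ini_2} does not make this finite. That hypothesis only gives $\|K_0^\epsilon\|_\infty\le C'$, so $z^\epsilon(0)=-K_0^\epsilon/K$ is merely bounded. Hence $w^\epsilon(0)$ equals $-K_0^\epsilon/K-d_u/\tilde F_u$ up to an $L^2$ function, and this need not lie in $L^2(\mathbb{R}^d)$. No choice of test function repairs this, because \eqref{estimation-z} is false under the stated hypotheses. Take $K\equiv A$, $n_i^{init,\epsilon}=0$ and $n_u^{init,\epsilon}=A+\epsilon\cos x_1$ with $\epsilon<A$; these satisfy Assumptions \ref{hypK} and \ref{ass:init}. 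Then $p^\epsilon\equiv 0$. Moreover $z^\epsilon(t,\cdot)$ is $2\pi$-periodic in $x_1$, independent of the other variables, and nonconstant for small $t>0$. So $\int_0^T\!\int_{\mathbb{R}^d}|\nabla z^\epsilon|^2=+\infty$ for every $\epsilon$. A cutoff only yields local bounds, so it does not rescue the global statement. One needs an extra well-preparedness hypothesis, for instance $\epsilon\|z^{init,\epsilon}-h(p^{init,\epsilon})\|_{L^2}^2\le C$. Your observation that $z^\epsilon\notin L^2(\mathbb{R}^d)$ is correct. It is exactly the point the paper's proof passes over when it multiplies \eqref{eq-zepsilon} by $z^\epsilon$ and bounds the reaction integral over $\mathbb{R}^d$ by a constant $S_3$.

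\textbf{The energy identity is incomplete even with such a hypothesis.} Since $\partial_t z^\epsilon\,w^\epsilon=\frac12\partial_t|w^\epsilon|^2+\partial_t z^\epsilon_{eq}\,w^\epsilon$, there is an extra term $2\epsilon\int K^2\partial_t z^\epsilon_{eq}\,w^\epsilon$. It contains $\partial_t p^\epsilon$, for which you have no bound. You must substitute $\partial_t p^\epsilon=(N^\epsilon)^{-2}\nabla\cdot((N^\epsilon)^2\nabla p^\epsilon)+O(p^\epsilon)$ and integrate by parts. This produces $\epsilon\int|\nabla p^\epsilon|^2$ and $\epsilon\int|\nabla p^\epsilon||\nabla z^\epsilon|$, which do fit your final inequality.

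\textbf{Your choice of $z^\epsilon_{eq}$ needs regularity you do not have.} With it, $\nabla z^\epsilon_{eq}$ contains $\nabla(\Delta K/K)$, that is, third derivatives of $K$. Assumptions \ref{hypK} give only $K\in C^2$, and $\Delta K\in L^1\cap L^\infty$ says nothing about $\nabla\Delta K$. Subtract only $h(p^\epsilon)$ instead, and keep $(1-\epsilon z^\epsilon)\Delta K/K$ as a source term. Young's inequality absorbs it into the damping term $\int K^2\tilde F_u a(p^\epsilon)|w^\epsilon|^2$ at cost $C\int|\Delta K|^2$.

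\textbf{The second step is sound.} With these repairs, your second step closes the bootstrap for small $\epsilon$. Testing with $(N^\epsilon)^2p^\epsilon$ and eliminating $N^\epsilon\partial_t N^\epsilon$ through \eqref{eq-zepsilon} is valid. The paper's route for that step is simpler: it tests the unweighted equation with $p^\epsilon$. It then bounds $2\int p^\epsilon\nabla p^\epsilon\cdot\nabla\ln N^\epsilon$ by Cauchy--Schwarz, using $\nabla K\in L^2$ and $\|\nabla\ln(1-\epsilon z^\epsilon)\|_{L^2}^2\le C\epsilon^2\|\nabla z^\epsilon\|_{L^2}^2$.
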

\begin{proof}
We begin by proving \eqref{estimation-z}.
 We multiply equation \eqref{eq-zepsilon} by $ z^{\epsilon}$ and integrate it over $\mathbb{R}^{d}$, to get
\begin{align*}
 &\frac{\epsilon}{2}\frac{d}{dt}\int_{\mathbb{R}^d} (z^{\epsilon})^2dx +\epsilon\int_{\mathbb{R}^d}|\nabla z^{\epsilon}|^2dx \\&\le  \int_{\mathbb{R}^d} \Big|\big[ z^{\epsilon}(1-\epsilon z^{\epsilon}(t,x))[\tilde{F_{u}}z^{\epsilon}((1-s_{f})p^{\epsilon}+(1-p^{\epsilon})(1-s_hp^{\epsilon}))- d_{u}(\delta p^{\epsilon}+1-p^{\epsilon})]\\&\quad\quad\quad+ z^{\epsilon}(1-\epsilon z^{\epsilon})\frac{\Delta K}{K}\big]\Big|\,dx
    + 2\epsilon \int_{\mathbb{R}^d}|z^{\epsilon}| \left|\frac{\nabla K}{K}\right| |\nabla  z^{\epsilon}|\,dx.
\end{align*}
  We have proved that $ z^{\epsilon}$ and $p^{\epsilon}$ are uniformly bounded, and under Assumptions \ref{hypK}, there exist two positive constants $S_3$ and $S_4$ such that  
  \begin{equation*}
    \frac{\epsilon}{2}\frac{d}{dt}\int_{\mathbb{R}^d} (z^{\epsilon})^2dx +\epsilon\int_{\mathbb{R}^d}|\nabla z^{\epsilon}|^2dx \le S_3 +2S_4\epsilon\int_{\mathbb{R}^d} |\nabla K||\nabla  z^{\epsilon}|dx. 
  \end{equation*}
Using a Cauchy-Schwarz  inequality and a Young inequality, we deduce
\begin{equation*}
\frac{\epsilon}{2}\frac{d}{dt}\int_{\mathbb{R}^d} (z^{\epsilon})^2dx +(\frac{3\epsilon}{4})\int_{\mathbb{R}^d}|\nabla z^{\epsilon}|^2dx  \le S_5.  
\end{equation*}
We obtain \eqref{estimation-z} through time integration.\\

Before proving estimation \eqref{estimation-grad p}, we show that 
$ \int_{0}^{T}\int_{\mathbb{R}^d}|\nabla \ln(1-\epsilon z^{\epsilon})|^2 dx dt $ approaches 0 as $\epsilon$ tends to 0. We have 
\begin{equation*}
  \int_{0}^{T}\int_{\mathbb{R}^d}|\nabla \ln(1-\epsilon z^{\epsilon})|^2 dx dt =   \epsilon^2\int_{0}^{T}\int_{\mathbb{R}^d} \frac{ |\nabla  z^{\epsilon}|^2}{|1-\epsilon z^{\epsilon}|^2}dxdt. 
\end{equation*}
Since  $z^{\epsilon} $ is  bounded, there exists a constant $S_6$ such that,
\begin{equation*}
  \int_{0}^{T}\int_{\mathbb{R}^d}|\nabla \ln(1-\epsilon z^{\epsilon})|^2 \,dxdt \le  \epsilon^2 S_6\int_{0}^{T}\int_{\mathbb{R}^d}  |\nabla  z^{\epsilon}|^2\,dxdt.  
\end{equation*}
Then, according to estimate \eqref{estimation-z}, the right hand side converges to $0$ as $\epsilon $ goes to $0$.

To show estimate \eqref{estimation-grad p}, we multiply equation \eqref{equation3} by $p^{\epsilon}$ and integrate it over $\mathbb{R}^d$ to get
\begin{align*}
  \frac{1}{2}\frac{d}{dt}\int_{\mathbb{R}^d} (p^{\epsilon})^2\,dx
  & +\int_{\mathbb{R}^d}|\nabla p^{\epsilon}|^2\,dx - 2\int_{\mathbb{R}^d}p^{\epsilon}\frac{\nabla K}{K}\nabla p^{\epsilon}\,dx 
  -2\int_{\mathbb{R}^d}p^{\epsilon}\nabla p^{\epsilon} \nabla \ln(1-\epsilon z^{\epsilon})\, dx \\
  & \le \int_{\mathbb{R}^d} p^\epsilon f(p^\epsilon)\,dx  + \frac{\max_{p \in [0,1]}|g(p)|}{c'}\int_{\mathbb{R}^d}|\Delta K|\,dx.
\end{align*}
Using a Cauchy-Schwarz inequality, we get 
\begin{align*}
  &\frac{1}{2}\frac{d}{dt}\int_{\mathbb{R}^d} (p^{\epsilon})^2dx +\int_{\mathbb{R}^d}|\nabla p^{\epsilon}|^2dx \\
  &\le \frac{1}{c'}\left(\int_{\mathbb{R}^d}|\nabla p^{\epsilon}|^2\,dx\right)^{\frac{1}{2}}
    \left(\int_{\mathbb{R}^d}|2\nabla K|^2\,dx \right)^{\frac{1}{2}}
    + \int_{\mathbb{R}^d} p^\epsilon f(p^\epsilon)\,dx \\
  & \quad + \frac{2}{3}\left(\int_{\mathbb{R}^d}|\nabla p^{\epsilon}|^2\,dx\right)^{\frac{1}{2}}
    \left(\int_{\mathbb{R}^d}|3\nabla \ln(1-\epsilon z^{\epsilon})|^2dx\right)^{\frac{1}{2}}\\
  &\quad + \frac{\max_{p \in [0,1]}|g(p)|}{c'}\int_{\mathbb{R}^d}|\Delta K|\,dx.
\end{align*}
From assumptions \ref{hypK} and the well-known inequality $2ab\le a^2+b^2$, we deduce after an integration in time
\begin{align}
  \label{ineq1}
  & \frac{1}{2}\int_{\mathbb{R}^d} (p^{\epsilon})^2\,dx +\frac{1}{6}\int_{0}^{T}\int_{\mathbb{R}^d}|\nabla p^{\epsilon}|^2\,dxdt \\
  &\le S' + \int_{\mathbb{R}^d} p^\epsilon f(p^\epsilon)\,dx +\int_{0}^{T}\int_{\mathbb{R}^d}|\nabla \ln(1-\epsilon z^{\epsilon})|^2\,dxdt. \nonumber
\end{align}
We note that $p^\epsilon f(p^\epsilon) \le C_1 (p^\epsilon)^2$ for some constant $C_1>0$. Therefore,
\begin{equation*}
\int_{\mathbb{R}^d} p^\epsilon f(p^\epsilon) \, dx \le C_1 \int_{\mathbb{R}^d} (p^\epsilon)^2 \, dx.
\end{equation*}
Inequality \eqref{ineq1} can thus be rewritten as
\begin{equation*}
  \frac{1}{2}\int_{\mathbb{R}^d} (p^{\epsilon})^2dx +\frac{1}{6}\int_{0}^{T}\int_{\mathbb{R}^d}|\nabla p^{\epsilon}|^2dxdt \le S' +  C_1 \int_{\mathbb{R}^d} (p^\epsilon)^2 \, dx +\int_{0}^{T}\int_{\mathbb{R}^d}|\nabla ln(1-\epsilon z^{\epsilon})|^2\,dxdt.
\end{equation*}
We then apply Grönwall's lemma to the inequality
\begin{equation*}
\frac{1}{2} \frac{d}{dt} \int_{\mathbb{R}^d} (p^\epsilon)^2 \, dx \le C \int_{\mathbb{R}^d} (p^\epsilon)^2 \, dx.
\end{equation*}
It provides a uniform bound for 
$\int_{\mathbb{R}^d} (p^\epsilon)^2 \, dx$ in terms of the initial data. Finally, by taking $\epsilon$ small enough, we obtain the desired estimate.\\

\textbf{Step 2 : Local relative Compactness.} For $R > 0$, we define the increasing sequence $(B_R)_{R}$ of balls of radius $R$ with center $0$ in $\mathbb{R}^d$. We use Lions-Aubin Lemma \ref{AubinLions} with $q=2$, $H_{R} = L^2(B_R)$ and $V_{R}= H^1(B_R)\cap L^{\infty}(B_R)$.
The sequence $(p^{\epsilon})$ is uniformly bounded in   $L^2(0,T,V_{R})$  from Lemma \ref{lemma estimate}. The compact embedding of $V_{R}$ in $H_{R}$ is well-known  from the Rellich-Kondrachov theorem. We are left to verify the bound on the time derivative : Let $\phi \in  V_{R}$, we denote $\langle\cdot,\cdot\rangle = \langle\cdot,\cdot\rangle_{V_{R}',V_{R}}$, the duality bracket. From equation \eqref{equation3}, we get
\begin{equation*}
  \int_{0}^{T}|\langle\partial_{t}p^{\epsilon}(t),\phi\rangle|^2\,dt
  = \int_{0}^{T}\left|\langle \Delta p^{\epsilon} + 2\frac{\nabla K}{K}\nabla p^{\epsilon} + 2\nabla p^{\epsilon} \nabla \ln(1-\epsilon z^{\epsilon})-\psi^{\epsilon},\phi\rangle\right|^2\,dt,
\end{equation*}
where $\psi^{\epsilon}$ is the function defining the right hand side in equation \eqref{equation3}, which is uniformly bounded in $L^1((0,T)\times B_{R})\cap L^{\infty}((0,T)\times B_{R})$.
Then,
\begin{align*}
  & \int_{0}^{T}\left|\langle\Delta p^{\epsilon} + 2\frac{\nabla K}{K}\nabla p^{\epsilon} + 2\nabla p^{\epsilon} \nabla \ln(1-\epsilon z^{\epsilon})-\psi^{\epsilon},\phi\rangle\right|^2\,dt \\
  & < \|\nabla\phi\|^2_{L^2(B_{R})}\int_{0}^{T}\|\nabla p^{\epsilon}\|^2_{L^2(B_{R})} \, dt 
    +\frac{2}{c'} \|\phi\|^2_{L^{\infty}(B_{R})} \|\nabla K\|^2_{L^2(B_{R})}
    \int_{0}^{T} \|\nabla p^{\epsilon}\|^2_{L^2(B_{R})}\,dt  \\
  &\quad  +2\|\phi\|^2_{L^{\infty}(B_{R})}
    \int_{0}^{T}\|\nabla \ln(1-\epsilon z^{\epsilon})\|^2_{L^2(B_{R})} \|\nabla p^{\epsilon}\|^2_{L^2(B_{R})}\,dt + \|\phi\|^2_{L^2(B_{R})}\|\psi^{\epsilon}\|^2_{L^{\infty}(B_{R})}.
\end{align*}

\textbf{Step 3 : Global convergence.} For all $R \in \mathbb{N}$, one can extract from $(p^\epsilon)_\epsilon$ a subsequence converging in $L^{2}(0,T,H_{R})$, by Lions-Aubin lemma. We perform a diagonal extraction process successively in $R$, so that
\begin{equation*}
  p^{\epsilon_{m}^{(R)}}\xrightarrow{m \to +\infty} p^{(R)}\quad in~~L^{2}(0,T,H_{R}),
\end{equation*}
and by construction $(\epsilon_{m}^{(R_1)})_{m}$ is a subsequence of $(\epsilon_{m}^{(R_2)})_{m}$ if $R_2 > R_1$. This ensures the consistency of the subsequences on increasing nested domains. Since the family $(p^{\epsilon})_{\epsilon}$ is uniformly bounded in $L^{2}(0,T;H^1(\mathbb{R}^d))$, we can also obtain the weak convergence of the gradient:  
\begin{equation*}
    \nabla p^{\epsilon_{m}^{(R)}} \xrightarrow{m \to +\infty}\nabla p^{(R)} \quad \text{in } L^{2}(0,T;H_{R}).
\end{equation*}
The limits $p^{(R)}$ are well defined, independent of the extracted subsequences, and satisfy the same bounds as $(p^{\epsilon})_{\epsilon}$. Additionally,  
\begin{equation*}
    R_2 > R_1 \implies p^{(R_2)}\big|_{B_{R_1}} = p^{(R_1)}.
\end{equation*}
This property allows us to define a global limit function $p^{0} \in L^{2}(0,T;L^{2}(\mathbb{R}^d))$ by combining the local limits $p^{(R)}$ for $R > 0$. Finally, we have constructed a subsequence, still denoted as $(p^{\epsilon})$, such that  
\begin{equation*}
    p^{\epsilon} \xrightarrow{\epsilon \to 0} p^{0} \quad \text{strongly in } L^{2}(0,T;L^{2}(B_R)), \quad \forall R > 0.
\end{equation*}
To transition from local convergence to global convergence, it is necessary to obtain uniform estimates in $\epsilon$ for the tails, i.e., for $|x| > R$. To achieve this, we introduce a function $\phi \in C^{\infty}(\mathbb{R}^{d})$ such that $0 \leq \phi \leq 1$, $\phi(x) = 0$ if $|x| < \frac{1}{2}$, and $\phi(x) = 1$ if $|x| > 1$. We then define $\phi_{R}(x) = \phi\left(\frac{x}{R}\right)$.
Multiplying \eqref{equation3} by $p^{\epsilon} \phi_{R}$ and integrating over $\mathbb{R}^d$, we obtain
\begin{align*}
  \frac{1}{2} \frac{d}{dt} \int_{\mathbb{R}^d}(p^{\epsilon})^2\phi_{R}\, dx
  &\le -\int_{\mathbb{R}^d}\nabla(p^{\epsilon}\phi_{R})\nabla p^{\epsilon} \, dx + 2\int_{\mathbb{R}^d}\frac{\nabla K}{K}\nabla p^{\epsilon} p^{\epsilon} \phi_{R}\, dx \\
  &\quad+ 2 \int_{\mathbb{R}^d}\nabla p^{\epsilon}\nabla \ln(1-\epsilon z^{\epsilon}) p^{\epsilon} \phi_{R}\, dx + A \int_{\mathbb{R}^d} (p^{\epsilon})^2 \phi_{R}\, dx,
\end{align*}
where
\begin{align*}
  A = \sup_{\substack{p \in [0,1]}}
  & \left( \left|\frac{(1-p)(s_{h}p - s_{f}) d_{u}[(\delta-1)p + 1]}{(1-s_{f})p + (1-p)(1-s_{h}p)} - d_{u}(\delta-1)\right| \right.  \\
  & \quad + \left. \left|\frac{\max\left[\frac{\Delta K}{K}\right] (1-p)(s_{h}p - s_{f})}{(1-s_{f})p + (1-p)(1-s_{h}p)}\right| \right).
\end{align*}
Expanding the first term of the right hand side of this latter inequality, we get
\begin{equation*}
\int_{\mathbb{R}^d} \nabla(p^{\epsilon} \phi_R) \cdot \nabla p^{\epsilon} \, dx = \int_{\mathbb{R}^d} p^{\epsilon} \nabla \phi_R \cdot \nabla p^{\epsilon} \, dx + \int_{\mathbb{R}^d} \phi_R |\nabla p^{\epsilon}|^2 \, dx.
\end{equation*}
By bounding $|\nabla \phi_R|$ by its $ L^{\infty} $ norm and applying Hölder's inequality, we get the following.
\begin{equation*}
  \left|\int_{\mathbb{R}^d} p^{\epsilon} \nabla \phi_R \cdot \nabla p^{\epsilon} \, dx\right|
  \leq \|\nabla \phi\|_{L^{\infty}} \int_{\mathbb{R}^d} |p^{\epsilon}  \cdot \nabla p^{\epsilon} |\, dx.
\end{equation*}
Then, using Young's inequality, we deduce
\begin{equation*}
\left\|\int_{\mathbb{R}^d} p^{\epsilon} \nabla \phi_R \cdot \nabla p^{\epsilon} \, dx\right\| \leq \frac{\|\nabla \phi\|_{L^{\infty}}}{R}\|p^{\epsilon}\|_{L^{2}}\|\nabla p^{\epsilon}\|_{L^{2}}.
\end{equation*}







Following the same procedure and given that $ z^{\epsilon} $ is uniformly bounded, we get
\begin{align*}
\frac{1}{2} \frac{d}{dt} \int_{\mathbb{R}^d} (p^{\epsilon})^2 \phi_R \, dx 
&\leq \frac{\|\nabla \phi\|_{L^{\infty}}}{R}\|p^{\epsilon}\|_{L^{2}}\|\nabla p^{\epsilon}\|_{L^{2}} + 2 \max \left| \frac{\nabla K}{K} \right| \int_{\mathbb{R}^d} |\nabla p^{\epsilon} p^{\epsilon} \phi_R| \, dx 
\\&\quad+ \epsilon \int_{\mathbb{R}^d} |\nabla p^{\epsilon}|^2 \, dx 
+ \epsilon^2 S' \int_{\mathbb{R}^d} |\nabla z^{\epsilon}|^2 \, dx
+ A \int_{\mathbb{R}^d} (p^{\epsilon})^2 \phi_R \, dx.
\end{align*}
It implies
\begin{align*}
\frac{1}{2} \frac{d}{dt} \int_{\mathbb{R}^d} (p^{\epsilon})^2 \phi_R \, dx 
&\leq \left(\frac{1}{R}+\epsilon\right) \left(||\nabla \phi||_{L^{\infty}}||p^{\epsilon}||_{L^{2}}||\nabla p^{\epsilon}||_{L^{2}} + \epsilon S' \int_{\mathbb{R}^d} |\nabla z^{\epsilon}|^2 \, dx \right)
\\&\quad+ 2 \max \left| \frac{\nabla K}{K} \right| \int_{\mathbb{R}^d} |\nabla p^{\epsilon} p^{\epsilon} \phi_R| \, dx
+ A \int_{\mathbb{R}^d} (p^{\epsilon})^2 \phi_R \, dx.
\end{align*}
Let us define
\begin{align*}
h_{R,\epsilon}(t) &= \left(\frac{1}{R}+\epsilon\right) \left(\|\nabla \phi\|_{L^{\infty}}\|p^{\epsilon}\|_{L^{2}}\|\nabla p^{\epsilon}\|_{L^{2}} + \epsilon S' \int_{\mathbb{R}^d} |\nabla z^{\epsilon}|^2 \, dx \right)
\\&\quad\quad+ 2 \max \left| \frac{\nabla K}{K} \right| \int_{\mathbb{R}^d} |\nabla p^{\epsilon} p^{\epsilon} \phi_R| \, dx.
\end{align*}
Then, we have
\begin{equation*}
\frac{1}{2} \frac{d}{dt} \int_{\mathbb{R}^d} (p^{\epsilon})^2 \phi_R \, dx \leq h_{R,\epsilon}(t) + A \int_{\mathbb{R}^d} (p^{\epsilon})^2 \phi_R \, dx.
\end{equation*}
Thanks to Lemma \ref{lemma estimate}, we know that $ h_{R,\epsilon}$ is uniformly bounded in $L^1(0,T) $.
Using Gronwall's Lemma, we get
\begin{equation*}
\int_{\mathbb{R}^d} (p^{\epsilon})^2 \phi_R \, dx \leq e^{2At} \int_{\mathbb{R}^d} (p^{\text{init}})^2 \phi_R \, dx + \int_0^t h_{R,\epsilon}(\tau) e^{2A(t-\tau)} \, d\tau.
\end{equation*}
From the definition of $\phi_R$ (recalling that $\phi_R(x) = 1$ on $\mathbb{R}^d \setminus B_R$), we deduce that for all $\epsilon > 0$ sufficiently small and for all $R > 0$, we have
\begin{align}\nonumber
  \int_{0}^{T} \int_{\mathbb{R}^d \setminus B_R} (p^{\epsilon})^2 \, dx \, dt 
  &\leq \int_{0}^{T} \int_{\mathbb{R}^d} (p^{\epsilon})^2 \phi_R \, dx \, dt \\
  &\leq \frac{1}{2A} \left[e^{2AT} - 1\right] 
    \left[ \int_{\mathbb{R}^d \setminus B_{\frac{R}{2}}} (p^{\text{init}})^2 \, dx + \int_{0}^{T} h_{R,\epsilon}(t) \, dt \right].
    \label{eq:estimation_in_R}
\end{align}
We deduce a uniform bound, since $p^{\text{init}} \in L^2(\mathbb{R}^d)$.
Therefore, we conclude that the subsequence $(p^{\epsilon})_{\epsilon}$ converges strongly to $p^0$ in $L^2(0, T; L^2(\mathbb{R}^d))$ as $\epsilon \to 0$.
Indeed, the reasoning is as follows. We have
\begin{equation*}
    \int_{0}^T \int_{\mathbb{R}^d} |p^{\epsilon} - p^0|^2 \, dx dt = \int_{0}^T \int_{B_R} |p^{\epsilon} - p^0|^2 \, dx dt 
    + \int_{0}^T \int_{\mathbb{R}^d \setminus B_R} |p^{\epsilon} - p^0|^2 \, dx dt.
\end{equation*}
This inequality implies
\begin{align*}
  \int_{0}^T \int_{\mathbb{R}^d} |p^{\epsilon} - p^0|^2 \, dx dt  \leq
  & \int_{0}^T \int_{B_R} |p^{\epsilon} - p^0|^2 \, dx dt 
    + 2 \int_{0}^{T} \int_{\mathbb{R}^d \setminus B_R} |p^{\epsilon}|^2 \, dx dt\\ 
  & + 2 \int_{0}^{T} \int_{\mathbb{R}^d \setminus B_R} |p^0|^2 \, dx dt.
\end{align*}
The second term on the right-hand side is uniformly bounded for sufficiently large $R$ due to inequality \eqref{eq:estimation_in_R} and the fact that $p^0 \in L^2(0, T; L^2(\mathbb{R}^d))$. Regarding the first term, we rely on local convergence.\\

\textbf{Step 4 :  Passing to the limit.} Using the strong convergence of the sequence $ (p^{\epsilon}) $ in $ L^2(0, T; L^2(\mathbb{R}^d)) $ and the Lipschitz continuity of the function $I$ defined by
$$
I(p) =
p(1-p)\left[(s_{h}p-s_{f})\frac{d_{u}[(\delta-1)p+1]-D\frac{\Delta K}{K}}{(1-s_{f})p+(1-p)(1-s_{h}p)}-d_{u}(\delta-1)\right],
$$
we deduce that $ (I(p^{\epsilon})) $ converges strongly in $ L^2(0, T; L^2(\mathbb{R}^d)) $ to $ I(p^0) $,
where
Using the weak formulation of the equation satisfied by $p^{\epsilon}$ \eqref{equation3}, we derive an equation satisfied by $p^0$. Let $ \chi \in C^{\infty}_{C}((0,T)\times \mathbb{R}^d)$, we have 
\begin{align}\label{eq-limite}
  & \int_{0}^{T}\int_{\mathbb{R}^{d}} \left( p^{\epsilon}\partial_{t}\chi-\nabla p^{\epsilon} \nabla \chi-2\frac{\nabla K}{K}\nabla p^{\epsilon} \chi-2\nabla p^{\epsilon} \nabla \ln(1-\epsilon z^{\epsilon})\chi \right) \,dxdt \\
  & =  \int_{0}^{T}\int_{\mathbb{R}^{d}}I(p^{\epsilon})\chi\,dxdt.   \nonumber
\end{align}
From the above convergence it is straightforward to pass to the limit into the first two terms of the left hand side. For the fourth  term, we use the uniform bound
\begin{equation*}
  \int_{0}^{T}\int_{\mathbb{R}^{d}}|\nabla p^{\epsilon}|^2dxdt \le S',
\end{equation*}
and a Cauchy Schwarz inequality to get 
\begin{equation*}
  \int_{0}^{T}\int_{\mathbb{R}^{d}} \nabla p^{\epsilon} \nabla \ln(1-\epsilon z^{\epsilon})\chi \,dxdt
  \le S'\|\chi\|_{L^{\infty}} \| \nabla \ln(1-\epsilon z^{\epsilon})\|_{L^\infty} \xrightarrow{\epsilon \to 0} 0.
\end{equation*}
In the case where $p^{\epsilon}$ converges strongly, we can take the limit for the first term on the right-hand side of \eqref{eq-limite}. Finally, by letting $\epsilon \to 0$ in \eqref{eq-limite}, we obtain the limit equation
\begin{equation}\label{eqlimite}
  \partial_{t}p^0 - \Delta p^0 - 2\frac{\nabla K}{K}\nabla p^0 = f(p^0) + \frac{\Delta K}{K} g(p^0),
\end{equation}
with the expressions of $f$ and $g$ given in the statement of Theorem \ref{thm-conv}.
\end{proof}

\section{Blocking in heterogeneous setting}\label{section4}
This section is devoted to the proofs of the main result on the blocking.

\subsection{No blocking for small variations}
We first consider the proof of Lemma  \ref{c small}.

\textit{Proof of Lemma \ref{c small}.}
Consider the equation.
\begin{align}\label{subonde}
  &\partial_t p - \partial_{xx} p - 2 \frac{ K^{'}}{K} \partial_x p  \\
  & = \frac{p(1-p)}{(1-s_f)p + (1-p)(1-s_h p)} \left( p \left( s_h \left( \delta d_u - \frac{ K^{''}}{K} \right) \right) - d_u(s_f + \delta - 1) \right).\notag
\end{align}

Let $ 0 < A < s_h \delta d_u $ and define
\begin{equation*}
f_A(p) = \frac{p(1-p)}{(1-s_f)p + (1-p)(1-s_h p)} \left( p A - d_u(s_f + \delta - 1) \right),
\end{equation*}
where $A$ is such that condition $ \int_{0}^{1} f_A(p) \, dp > 0$ holds, it is possible since $F(1) > 0$. Then, there exists $ c_A > 0$ and a decreasing function $\phi_A$ satisfying
\begin{equation*}
\begin{cases}
-\phi_A^{''} - c_A \phi_A^{'} = f_A(\phi_A), \\
\phi_A(-\infty) = 1,~~\phi_A(+\infty) = 0, \\
\phi_A^{'} < 0.
\end{cases}
\end{equation*}
Assume $\left|\frac{K^{''}}{K}\right| \le s_h \delta d_u - A$ and $2 \left|\frac{K^{'}}{K}\right| < c_A$. Let $p_A(t,x) = \phi_A(x - c_A t)$ with $c_A>0$. Then
\begin{equation*}
\partial_t p_A - \partial_{xx} p_A - 2 \frac{K^{'}}{K} \partial_x p_A = - 2 \frac{K^{'}}{K} \phi^{'}_A - \phi_A^{''} = \left[ c_A - 2 \frac{K^{'}}{K} \right] \phi^{'}_A + f_A(\phi_A).
\end{equation*}
Since $2 \left| \frac{K^{'}}{K} \right| < c_A$ and $\phi_A^{'} < 0$, it follows that
\begin{equation*}
\partial_t p_A - \partial_{xx} p_A - 2 \frac{K^{'}}{K} \partial_x p_A < f_A(\phi_A) = f_A(p_A).
\end{equation*}
Finally, using $\left|\frac{K^{''}}{K}\right| \le s_h \delta d_u - A$, we have
\begin{align*}
  &\partial_t p_A - \partial_{xx} p_A - 2 \frac{K^{'}}{K} \partial_x p_A \\
  &< \frac{p_A(1-p_A)}{(1-s_f)p_A + (1-p_A)(1-s_h p_A)} \left( p_A \left( s_h \left( \delta d_u - \frac{ K^{''}}{K} \right) \right) - d_u(s_f + \delta - 1) \right).
\end{align*} 
Therefore, $ p_A $ is a sub-solution of equation \eqref{subonde}, which implies that there is no blocking for $ \frac{K^{'}}{K} $ and $ \frac{ K^{''}}{K} $ sufficiently small.
\qed

\subsection{Proof of Theorem \ref{thmprincipale}}~~\par
Now, we proceed to the proof of Theorem \ref{thmprincipale}, which establishes the existence of barriers.

\subsubsection{Preliminaries}

We first reformulate the problem by introducing 
\begin{equation*}
    p(0) = \beta,~~ p(L) = \alpha.
\end{equation*}
Then for $ 0 < \alpha  < \beta  < 1$, $\alpha\leq \frac{s_f}{s_h}$, we aim to find $C$ and $L > 0$ such that
\begin{equation}\label{equation5}
  \begin{cases}
-p''-2Cp' - C^{2}g(p) = f(p), \qquad \text{ on } (0,L), \\
p(0) = \beta, \hspace{0.2cm}p(L) = \alpha,\\
p{'}(0^-)-p{'}(0^+)= Cg(p(0)) = Cg(\beta),\\
p{'}(L^-)-p{'}(L^+)= -Cg(p(L)) = -Cg(\alpha),\\
\frac{1}{2}p'(0^-)^{2}+F(\beta) = F(1),~~  \frac{1}{2}p'(L^+)^{2}+F(\alpha) = 0.
\end{cases}
\end{equation}
The following Lemma states a link between equations \eqref{equationelliptique} and \eqref{equation5}.\\
\begin{lemm}\label{lemma1}
Let $C, L > 0$. If $(C,L) \in B(f)$, then there exists $(\alpha,\beta)$ such that \eqref{equation5} has a solution. Conversely, if there exist $\alpha$, $\beta$, $C$, and $L$ such that \eqref{equation5} has a solution, then this solution is also a solution of \eqref{equationelliptique}.
\end{lemm}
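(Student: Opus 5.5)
The plan is to exploit the fact that on the two half-lines $(-\infty,0)$ and $(L,+\infty)$ the equation is autonomous and conservative, $-p''=f(p)$. There the energy $E(p,p')=\tfrac12 (p')^2+F(p)$ is constant. So the boundary conditions at $\pm\infty$ pin down the values of $E$, and the two conditions in the last line of \eqref{equation5} are exactly these energy identities evaluated at $x=0^-$ and $x=L^+$.

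\textbf{Direct implication.} Let $(C,L)\in B(f)$ and let $p$ be a $(C,L)$-barrier. Set $\beta=p(0)$ and $\alpha=p(L)$.

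First, $0<p<1$ everywhere. Positivity is part of \eqref{equationelliptique}. For $p<1$, suppose $p$ reached a maximum $\ge 1$ on one of the outer intervals; the strong maximum principle for $-p''=f(p)$ (with $f(1)=0$ and $f$ Lipschitz) excludes this. On $(0,L)$ the same argument applies with the extra terms, noting $g(1)=0$.

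Next, on $(-\infty,0)$ the energy $\tfrac12 p'(x)^2+F(p(x))$ is constant. Since $p(x)\to1$ as $x\to-\infty$, elliptic/ODE estimates give $p'(x)\to0$: $p''=-f(p)$ is bounded, so $p'$ is uniformly continuous and $p$ has a limit. Hence the constant equals $F(1)$, and letting $x\to0^-$ yields $\tfrac12 p'(0^-)^2+F(\beta)=F(1)$.

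In the same way, on $(L,\infty)$ we get $\tfrac12 p'(L^+)^2+F(\alpha)=F(0)=0$. The jump conditions and the equation on $(0,L)$ are copied directly from \eqref{equationelliptique}. So \eqref{equation5} holds.

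The constraints $0<\alpha<\beta<1$ and $\alpha\le s_f/s_h$ are not required by the statement as written, only the existence of some $(\alpha,\beta)$. If one wants them anyway, note that the energy identity on $(L,\infty)$ forces $F(\alpha)\le0$, so $\alpha\le\theta_1$-type bounds follow. I would mention this only as a remark.

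\textbf{Converse.} Take a solution $p$ of \eqref{equation5} on $[0,L]$. Extend it to $x<0$ by solving $-p''=f(p)$ with Cauchy data $p(0)=\beta$ and $p'(0^-)=p'(0^+)+Cg(\beta)$. Extend it to $x>L$ with data $p(L)=\alpha$ and $p'(L^+)=p'(L^-)+Cg(\alpha)$. By construction the jump conditions hold, so it remains to prove the limits at $\pm\infty$ and positivity.

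On $(L,\infty)$ the trajectory lies on the level set $\tfrac12 q^2+F(p)=0$ in the phase plane. Since $F<0$ on $(0,\theta_1]$ and $F(1)>0$, the part of this level set with $p\in[0,\alpha]$ is the homoclinic/heteroclinic orbit connecting to the saddle $(0,0)$. We must select the branch with $p'(L^+)\le0$, i.e. $p'(L^+)=-\sqrt{-2F(\alpha)}$. The trajectory then decreases monotonically and tends to the saddle $0$ as $x\to+\infty$, reaching it only asymptotically. Thus $p>0$ there and $p(+\infty)=0$.

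Similarly, on $(-\infty,0)$ the level set $\tfrac12 q^2+F(p)=F(1)$ near $p=1$ is the stable/unstable manifold of the saddle $(1,0)$; this uses $f'(1)<0$ and $F(p)<F(1)$ for $p<1$. Choosing $p'(0^-)=-\sqrt{2(F(1)-F(\beta))}\le0$, the backward trajectory increases to $1$ as $x\to-\infty$.

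\textbf{Main obstacle.} The delicate point is this sign selection: the energy identities only fix $|p'(0^-)|$ and $|p'(L^+)|$. I would therefore interpret \eqref{equation5} as including $p'(0^-)\le0$ and $p'(L^+)\le0$, which is the natural decreasing-front choice. I would also check that the relevant level sets contain no other equilibrium between $\alpha$ and $0$, or between $\beta$ and $1$. This holds because $F(p)<0$ for $p\in(0,\theta_1]$ and $F(p)<F(1)$ for $p\in(0,1)$, both following from the bistable structure in Remark~\ref{rem:bistable} together with $F(1)>0$. Positivity on $(0,L)$ comes from the given solution, since $p$ takes values between $\alpha$ and $\beta$ there or, if not, is positive by a maximum-principle argument.
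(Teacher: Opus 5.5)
Your proof is correct and follows the paper's route: conservation of $\frac12(p')^2+F(p)$ on $(-\infty,0)$ and $(L,+\infty)$, with the levels $F(1)$ and $0$ fixed by the limits at $\mp\infty$. Your extension along the saddle manifolds of $(1,0)$ and $(0,0)$ spells out the converse that the paper only calls ``a similar line of reasoning''. Your sign caveat is also right, since the paper tacitly takes the negative roots in \eqref{bvp}. Only $p'(0^-)<0$ is actually needed: the set $\{\frac12 q^2+F(p)=0,\ 0<p<1\}$ is the homoclinic loop of $(0,0)$, so either sign at $L^+$ gives $p(+\infty)=0$.

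The one step to make precise is positivity on $(0,L)$. For large $C$ one has $f(p)+C^2g(p)<0$ for small $p<0$, so the maximum principle applied to $p$ itself yields no contradiction at a negative minimum. Instead, $u=e^{Cx}p$ solves $u''=\bigl(C^2-\frac{f(p)+C^2g(p)}{p}\bigr)u$, whose coefficient is positive wherever $p<0$ (there $f(p)/p<0$ and $g(p)/p<1$). This excludes a negative minimum of $u$ between $u(0)=\beta>0$ and $u(L)=e^{CL}\alpha>0$.
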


\begin{proof}
  Let $C$ and $L$ be positive constants. If $(C,L) \in B(f)$, then the problem (\ref{equationelliptique}) admits a solution denoted by $p$.
  Thus, the first equation of problem \eqref{equation5} is satisfied. It remains to prove the existence of $\alpha$ and $\beta$ that fulfill the conditions:
  \begin{equation*}
    p(0) = \beta,\hspace{0.2cm} p(L) = \alpha,\hspace{0.2cm} \frac{1}{2}p'(0^-)^{2} + F(\beta) = F(1), \hspace{0.2cm} \frac{1}{2}p'(L^+)^{2}+ F(\alpha) = 0.
  \end{equation*}
  Let us define $\alpha, \beta > 0$ such that $p(0) = \beta$ and $p(L) = \alpha$.
  By conservation of the quantity $\frac 12 p'(x)^2 + F(p(x))$ on $(-\infty,0)$ and on $(L,+\infty)$, we get that $\frac{1}{2}p'(0^-)^{2} + F(\beta) = F(1)$ and $\frac{1}{2}p'(L^+)^{2}+ F(\alpha) = 0$.
  Then, we establish the existence of $\alpha$ and $\beta$ that fulfill the conditions in Equation \eqref{equation5} and ensure the solvability of the problem. To reverse the statement, we can employ a similar line of reasoning.
\end{proof}

System \eqref{equation5} may be rewritten as a boundary value problem on $(0,L)$ for $0<\alpha < \beta<1$, $\alpha\leq \frac{s_f}{s_h}$~:
\begin{equation}\label{bvp}
  \begin{cases}
    -p'' - 2 C p' - C^{2}g(p) = f(p),  \qquad \text{ on } (0,L), \\
    p(0) = \beta, \qquad   p{'}(0^+) = - \sqrt{2(F(1)-F(\beta))} - Cg(\beta),\\
    p(L) = \alpha, \qquad p{'}(L^-) = - \sqrt{-2F(\alpha)} -Cg(\alpha).
  \end{cases}
\end{equation}

  \begin{lemma}\label{lem:monoton}
    Under assumption \eqref{hyp_coeff}--\eqref{s_h}, let $0<\alpha<\beta<1$, $\alpha\leq \frac{s_f}{s_h}$, and $C>0$ be such that $\sqrt{2(F(1)-F(\beta))} + Cg(\beta)>0$, then any solution $p$ of \eqref{bvp}, such that $0\leq p \leq 1$, is nonincreasing on $(0,L)$.
\end{lemma}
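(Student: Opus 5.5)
The plan is a contradiction argument combining the sign of $p'$ at both endpoints with the dissipated energy of the ODE on $(0,L)$. Throughout I write $G(p)=\int_0^p g(s)\,ds$ and $H=F+C^2G$, so the equation in \eqref{bvp} reads $p''+2Cp'+H'(p)=0$.

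\textbf{Step 1: $p'<0$ near both endpoints.} At $x=0$ the hypothesis $\sqrt{2(F(1)-F(\beta))}+Cg(\beta)>0$ gives $p'(0^+)<0$ directly. At $x=L$, the condition $\alpha\le s_f/s_h$ gives $s_h\alpha-s_f\le 0$. The denominator $(1-s_f)p+(1-p)(1-s_hp)$ is positive on $[0,1]$, so $g(\alpha)\ge 0$. Hence $p'(L^-)=-\sqrt{-2F(\alpha)}-Cg(\alpha)\le 0$. In the degenerate case $F(\alpha)=0=g(\alpha)$, I would use the equation to get $p''(L)$, or simply note that this only allows $p'(L)=0$ at an endpoint, which does not affect monotonicity on the open interval.

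\textbf{Step 2: reduction to a local maximum.} Suppose $p$ is not nonincreasing. Then there are points $0<x_1<x_2<L$ with $p'(x_1)=p'(x_2)=0$ and $p'>0$ on $(x_1,x_2)$. Thus $x_1$ is a local minimum, with value $m$, and $x_2$ a local maximum, with value $M>m$. By Step 1 there is a further point $x_3\in(x_2,L)$ where $p$ has returned to the value $m$ or below, or else $p$ decreases to $\alpha$.

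\textbf{Step 3: the two tools for the contradiction.}
\begin{itemize}
\item[(a)] The energy $E(x)=\tfrac12p'(x)^2+H(p(x))$ satisfies $E'=-2Cp'^2\le 0$, strictly decreasing wherever $p'\neq0$. This gives $H(M)<H(m)$, and, comparing $x_2$ with later points, $H(p(x))<H(M)$ at any later critical point.
\item[(b)] The sign structure of $H'$. Explicitly,
\[
H'(p)=f(p)+C^2g(p)=\frac{p(1-p)\big[(d_u\delta-C^2)s_hp+C^2s_f-d_u(s_f+\delta-1)\big]}{(1-s_f)p+(1-p)(1-s_hp)},
\]
whose bracket is affine in $p$. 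So $H'$ changes sign at most once on $(0,1)$, at some $\theta_C$.
\end{itemize}
At the minimum, $p''(x_1)\ge0$ forces $H'(m)\le0$; at the maximum, $p''(x_2)\le0$ forces $H'(M)\ge0$. If $H'$ has one sign on $(0,1)$, these two inequalities cannot hold strictly. One would then treat equality through uniqueness for the Cauchy problem: if $p'=0$ and $H'(p)=0$ at the same point, $p$ is constant, which contradicts Step 1. If $H'$ changes sign, it must go from $-$ to $+$ at $\theta_C$, and then $m\le\theta_C\le M$.

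\textbf{Step 4 (main obstacle): excluding this last configuration.} Here $H$ is decreasing below $\theta_C$ and increasing above it, and $H(M)<H(m)$ is not by itself contradictory. I would use the identity $(e^{2Cx}p')'=-e^{2Cx}H'(p)$, integrated over $(x_1,x_2)$ and over $(x_2,x_3)$, where $x_3$ is the next critical point or $L$. After $x_2$, $p$ decreases through the region $\{H'>0\}$, so $e^{2Cx}p'$ keeps decreasing until $p$ falls below $\theta_C$. For $p$ to turn again, or to reach $\alpha$ with the prescribed slope, it must then spend time in $\{H'<0\}$. The plan is to combine this with the monotone energy: with $E(x_3)<E(x_2)=H(M)$, argue in the phase plane $(p,p')$ that the trajectory stays inside the sublevel set $\{E<H(M)\}$. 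That set cannot reach the boundary data at $L$, which require $E(L)=\tfrac12p'(L)^2+H(\alpha)$, while the data at $0$ force the initial energy level. I expect this phase-plane comparison, especially checking it against the boundary relations in \eqref{bvp}, to be the delicate part. If it fails in general, I would add the restriction $C^2\ge d_u\delta$, under which $H'$ keeps one sign on the relevant range and Step 3 already concludes.
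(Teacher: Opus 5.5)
\textbf{Gap in Step 4.} Your Steps 1--3 coincide with the paper's treatment of two of its three cases. When $H'=f+C^2g$ has one sign on $(0,1)$, or changes sign from $+$ to $-$, the second-order conditions at the local minimum and maximum are incompatible. The gap is Step 4. This is precisely the remaining case $C^2<\frac{d_u(\delta s_h-s_f-\delta+1)}{s_h-s_f}$, where the sign changes from $-$ to $+$, and your sketch does not produce a contradiction there.

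After $x_2$ the trajectory does stay in $\{E\le H(M)\}$. But $E(L^-)\le E(x_2)=H(M)$ is already forced by the monotonicity of $E$. So nothing prevents that sublevel set from containing the boundary data at $L$. Since $M\ge\theta_C$, the sign of $H(M)$ is not controlled, and comparing with the maximum is the wrong choice. Your fallback $C^2\ge d_u\delta$ does put you in the two easy cases. However, it proves a weaker lemma than the one stated, which allows every $C>0$ satisfying the endpoint condition.

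\textbf{The missing idea.} Evaluate the energy at the local \emph{minimum} and compare it with the right endpoint, using the explicit boundary slope there. Since $m\le\theta_C$, $H(0)=0$ and $H'<0$ on $(0,\theta_C)$, you have $E(x_1)=H(m)\le 0$. On the other hand, $p'(L^-)=-\sqrt{-2F(\alpha)}-Cg(\alpha)$ gives
\[
E(L^-)=\tfrac12\big(\sqrt{-2F(\alpha)}+Cg(\alpha)\big)^2+F(\alpha)+C^2G(\alpha)=Cg(\alpha)\sqrt{-2F(\alpha)}+\tfrac{C^2}{2}g(\alpha)^2+C^2G(\alpha)>0 .
\]
Positivity holds because $g\ge 0$ on $[0,s_f/s_h]$ and $0<\alpha\le s_f/s_h$, so $G(\alpha)>0$. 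As $E$ is nonincreasing, $E(x_1)\ge E(L^-)$, which is a contradiction. This is exactly how the paper closes the case. The local maximum, the identity for $(e^{2Cx}p')'$ and the energy level at $x=0$ are not needed there.

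Incidentally, the degenerate case in your Step 1 cannot occur. One has $s_f/s_h<\theta_1$, so $F(\alpha)<0$ and $p'(L^-)<0$ strictly.
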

\begin{proof}
Let us define
\begin{equation*}
h(p):=f(p)+C^{2}g(p)=\frac{p(1-p)}{(1-s_{f})p+(1-p)(1-s_{h}p)} A(p),
\end{equation*}
where
\begin{equation*}
A(p)=(d_{u}\delta - C^2) s_{h} p - d_u (s_{f} + \delta - 1) + C^{2}s_{f}.
\end{equation*}
Clearly, the term in factor of $A$ is nonnegative for $p\in(0,1)$, then the sign of $h$ is the same as the sign of $A$ which vanishes at the point
$$
p_h := \dfrac{d_u (s_f + \delta -1) - C^2 s_f}{(d_u \delta - C^2) s_h}.
$$
Then, when \eqref{hyp_coeff}--\eqref{s_h} holds, we have that
\begin{itemize}
\item If $0\leq C^2 < \frac{d_u}{s_h-s_f}(\delta s_h -s_f-\delta+1)$ (this latter quantity is positive thanks to \eqref{s_h}), we have $p_h\in (0,1)$. Then, $h(p)<0$ for $p\in(0,p_h)$, and $h(p)>0$ for $p\in (p_h,1)$. 
\item If $C^2\in [\frac{d_u}{s_h-s_f}(\delta s_h -s_f-\delta+1),d_u \frac{s_f+\delta-1}{s_f}]$, then $h(p)<0$ for $p\in(0,1)$.
\item If $C^2 > d_u \frac{s_f+\delta-1}{s_f}$, then $p_h\in (0,1)$ and $h(p)>0$ for $p\in(0,p_h)$, and $h(p)<0$ for $p\in (p_h,1)$.
\end{itemize}

\medskip

Let us take $C$ and $\beta$ such that $\sqrt{2(F(1)-F(0))} + Cg(\beta) >0$, then we have $p'(0^+)<0$ and $p'(L^-)<0$ (recalling that $g(\alpha)\geq 0$ for $\alpha\leq \frac{s_f}{s_h}$) and $p$ is regular (at least $C^1$). By contradiction, if $p$ is not nonincreasing on $(0,L)$, then there exists a local minimum point  $x_m$ and a local maximum point $x_M>x_m$, and $p$ is nondecreasing on $(x_m,x_M)$.

By definition of the maximum point, we have $p'(x_M)=0$ and $p''(x_M)<0$. Therefore, from the first equation in \eqref{bvp}, we get
$$
h(p(x_M)) = - p''(x_M) >0.
$$
It is not possible if $C^2\in [\frac{d_u}{s_h-s_f}(\delta s_h -s_f-\delta+1),d_u \frac{s_f+\delta-1}{s_f}]$. If $C^2 > d_u \frac{s_f+\delta-1}{s_f}$, it implies that $p(x_M) <p_h$.
However, at the minimum point, we have $p'(x_m)=0$ and $p''(x_m)>0$, hence
$$
h(p(x_m)) = - p''(x_m) <0.
$$
Then, if $C^2 > d_u \frac{s_f+\delta-1}{s_f}$, it implies that $p(x_m) > p_h > p(x_M)$.
This is a contradiction with the fact that $p$ is nondecreasing on $(x_m,x_M)$.

We are left to consider the case $C^2 < \frac{d_u}{s_h-s_f}(\delta s_h -s_f-\delta+1)$.
In this situation, we have $p(x_m)\in (0,p_h)$.
We define $E(x) = \frac 12 (p'(x))^2 + H(p(x))$ for $H=F+C^2 G$ the antiderivative of $h$ which vanishes in $0$. (In particular, since $h<0$ on $(0,p_h)$, we have $H<0$ on $(0,p_h)$.)  
We compute with \eqref{bvp}
$$
E'(x) = p''(x) p'(x) + h(p(x)) p'(x) = -\textcolor{blue}{2} C (p'(x))^2 \leq 0.
$$
Then, $E$ is nonincreasing, in particular at the minimum point $x_m\in (0,L)$, we have
\begin{align*}
  E(x_m) \geq E(L)
  & = \frac 12 (\sqrt{-2F(\alpha)}+Cg(\alpha))^2 + F(\alpha)+C^2 G(\alpha)  \\
  & = C\sqrt{-2F(\alpha)}g(\alpha) + \frac{C^2}{2} g(\alpha)^2 + C^2 G(\alpha) >0.
\end{align*}
Indeed, for $\alpha\leq \frac{s_f}{s_h}$, we have $g(\alpha)\geq 0$ (which implies obviously $G(\alpha)>0$).
However,
$E(x_m) = H(p(x_m)) < 0$, since $H$ is negative on $(0,p_h)$. This is a contradiction.
\end{proof}

\subsubsection{Existence of a stationary solution}~~\par
 
  Existence of a barrier boils down to show existence of a solution to \eqref{bvp}.
  The following Lemma allows us to prove Theorem \ref{thmprincipale}.
  \begin{lemm}[Existence of solutions]\label{existence CL}
  Assume that \eqref{hyp_coeff}--\eqref{s_h} hold, and that $f$ and $g$ are given by \eqref{def:f} and \eqref{def:g} with $F(1) > 0$.
  Then, for $\beta=\dfrac{s_f}{s_h}$ there exists $C^*>0$ such that for any $C>C^*$ there exist $\alpha \in (\displaystyle\argmax_{(0,1)} g,\dfrac{s_f}{s_h})$ and $L>0$, such that \eqref{bvp} admits a solution, which is decreasing.
\end{lemm}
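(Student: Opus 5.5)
The plan is a shooting argument from $x=0$. With $\beta=\frac{s_f}{s_h}$ we have $g(\beta)=0$, so the initial data in \eqref{bvp} reduce to $p(0)=\beta$ and $p'(0^+)=-\sqrt{2(F(1)-F(\beta))}<0$, independently of $C$. Let $p_C$ denote the solution of the Cauchy problem $-p''-2Cp'=h(p)$, where $h=f+C^2g$ as in the proof of Lemma \ref{lem:monoton}, with these data. We follow $p_C$ forward until it first reaches a level $\alpha$ at which the right boundary condition holds. Set
\[
\Phi_C(\alpha):=p_C'(x_\alpha)+\sqrt{-2F(\alpha)}+Cg(\alpha),
\]
where $x_\alpha$ is the first point with $p_C(x_\alpha)=\alpha$ and $\alpha$ ranges over $(\bar a,\beta)$, with $\bar a:=\argmax_{(0,1)}g$. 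A zero of $\Phi_C$ gives a solution of \eqref{bvp} with $L=x_\alpha$. Monotonicity then follows from Lemma \ref{lem:monoton}, whose hypothesis holds because $g(\beta)=0$. Decreasing, rather than merely nonincreasing, follows because $p'$ cannot vanish before $x_\alpha$ by construction.

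\textbf{Step 1: the trajectory descends.} Since $p'(0^+)<0$, $p_C$ decreases on a maximal interval $(0,x^*)$. We use the phase variable $q(p)=-p'$ on this interval. It satisfies $q\,\frac{dq}{dp}=2Cq-h(p)$; this is obtained from $p''=-2Cp'-h(p)$ together with $\frac{d}{dp}=\frac{1}{p'}\frac{d}{dx}$. For $C$ large, the third case of Lemma \ref{lem:monoton} applies with $p_h\approx \frac{s_f}{s_h}=\beta$. More precisely, for $p\in(\bar a,\beta)$ we have $g>0$, so $h\approx C^2 g>0$ there, and the term $2Cq$ dominates. I would compare $q$ with the rescaled variable $q=C r$. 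The equation then becomes $r\,r'=2r-g(p)+O(C^{-2})$, and $r$ starts at $r(\beta)=O(1/C)$. On a short range below $\beta$, $r$ grows toward the slow manifold $r\approx g(p)/2$, which is positive on $(\bar a,\beta)$. Hence $q>0$ along the whole range $p\in(\bar a,\beta)$: the solution keeps decreasing and reaches every level $\alpha\in(\bar a,\beta)$. So $x_\alpha$ is well defined and depends continuously on $\alpha$ and $C$.

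\textbf{Step 2: sign change of $\Phi_C$.} Near $\alpha=\beta$ we have $\Phi_C(\alpha)\to -q(\beta)+\sqrt{-2F(\beta)}+0=\sqrt{-2F(\beta)}-\sqrt{2(F(1)-F(\beta))}$. This quantity is negative exactly when $-F(\beta)<F(1)-F(\beta)$, that is, when $F(1)>0$. So $\Phi_C<0$ near $\beta$, and this is where the hypothesis $F(1)>0$ enters.

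Near $\alpha=\bar a$, in the scaled variable we expect $q(\alpha)\approx C g(\alpha)/2$. Then $\Phi_C(\alpha)=-q+Cg(\alpha)+O(1)\approx C g(\bar a)/2+O(1)>0$ for $C$ large. The intermediate value theorem then yields $\alpha\in(\bar a,\beta)$ with $\Phi_C(\alpha)=0$. Lemma \ref{lemma1} converts this into a $(C,L)$-barrier.

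\textbf{Main obstacle.} The hard part is the quantitative control of $q$ along the singular perturbation in Step 2. We need $q(\alpha)<Cg(\alpha)+\sqrt{-2F(\alpha)}$ at some $\alpha>\bar a$ for all $C>C^*$, uniformly, while the initial layer near $\beta$ (where $g\approx 0$) must not push $r$ above $g$.

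My first attempt would be a barrier argument in the $(p,r)$ plane. The curve $r=g(p)$ is a supersolution for the $r$-equation where $g'<0$, that is, on $(\bar a,\beta)$: on that curve $r\,r'$ has the sign of $2g-g=g>0$, whereas moving down in $p$ requires $dr/dp$ to match. This should trap $r\le g$ once the trajectory has entered below the curve, and the initial value $r(\beta)=O(1/C)$ lies below $g$ except at $\beta$ itself. Choosing $C^*$ so large that the $O(C^{-2})$ corrections and the initial offset are absorbed then gives the required strict positivity of $\Phi_C$ near $\bar a$.
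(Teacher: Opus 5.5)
Your overall route matches the paper's. You shoot from $x=0$ with $\beta=\frac{s_f}{s_h}$, so $g(\beta)=0$ and the initial slope does not depend on $C$. You then pass to the phase variable $q=-p'$ (the paper uses $w=\frac12 q^2$) and apply the intermediate value theorem to $\Phi_C$ on $(\bar a,\beta)$. Your sign computation as $\alpha\to\beta$ is exactly where the paper uses $F(1)>0$.

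\textbf{The gap} is at the other endpoint, which carries the only real estimate of the proof. Your trapping argument with the curve $r=g(p)$ cannot start, because at $p=\beta$ the trajectory lies strictly \emph{above} that curve: $r(\beta)=\sqrt{2(F(1)-F(\beta))}/C>0=g(\beta)$. So the sentence ``the initial value lies below $g$ except at $\beta$ itself'' is backwards, since $\beta$ is exactly where the initial value sits. Likewise, the difficulty in your last paragraph is reversed: you do not need to stop $r$ being pushed above $g$, you need to show it ever gets below $g$. Two further points are unproved. The invariance of $\{r\le g\}$ uses $g'<0$ on $(\bar a,\beta)$, a unimodality property of $g$ you do not establish. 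And Step 1 and the claim $q(\bar a)\approx Cg(\bar a)/2$ rest on a slow-manifold picture rather than on an inequality.

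\textbf{Repairing your route.} While $r>g$ you have $\frac{dr}{dp}=2-\frac{g}{r}-\frac{f}{C^2 r}>1$, because $f<0$ and $0\le g<r$ there. Hence $r$ drops below $g$ before $p=\beta-r(\beta)$. After that, a touching point $p_1$ would force $g'(p_1)>1$, so you still need a bound on $g'$ over the remaining range.

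\textbf{The paper's shortcut.} The paper avoids both the crossing and any shape information on $g$ by using a barrier that dominates the data from the start. Let $z(\tau)=\frac12 q^2$ at $p=\beta-\tau$ and $g_\infty=\max g$, and set $\widetilde z(\tau)=-F(\beta-\tau)+\frac{C^2}{8}g_\infty^2$. Since $-F\ge 0$ on $(0,\beta]$, this gives $2C\sqrt{2\widetilde z}\ge C^2 g_\infty\ge C^2 g$, so $\widetilde z$ is a supersolution. Moreover $\widetilde z(0)\ge z(0)$ once $C^2\ge 8F(1)/g_\infty^2$. At $\bar a$, where $g=g_\infty$, this yields $q(\bar a)\le\sqrt{-2F(\bar a)}+\frac C2 g(\bar a)$, that is, $\Phi_C(\bar a)\ge\frac C2 g(\bar a)>0$. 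This is your heuristic made rigorous, with explicit $C^*=2\sqrt{2F(1)}/g_\infty$.

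\textbf{A rigorous Step 1.} From $\frac{d}{dp}\big(\frac12 q^2\big)=2Cq-h(p)$, $q$ can only vanish at a level where $h=f+C^2g\le 0$. That set is $[p_h,\beta]$, with $\beta-p_h=O(C^{-2})$. On that interval $q$ stays close to $\sqrt{2(F(1)-F(\beta))}$, so it cannot vanish. The paper does not address this point explicitly, so it is worth keeping in your write-up.
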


\begin{proof} 
  The problem being rewritten as the boundary value problem \eqref{bvp}, we will use a shooting argument. We recall that $g\big(\frac{s_f}{s_h}\big)=0$, and $f<0$ on $(0,\frac{s_f}{s_h})$ and $g>0$ on $(0,\frac{s_f}{s_h})$. Then, in \eqref{bvp} we have $\frac 12 p^{'}(0^+)^2 = F(1)-F(\frac{s_f}{s_h})$.

  Let $p_{\beta}$ be the unique solution of the Cauchy problem on $(0,L)$~:
  \begin{equation}\label{equation6}
    \begin{cases}
      -p_{\beta}^{''} - 2 C p_{\beta}^{'} - C^{2}g(p_{\beta}) = f(p_{\beta}), \qquad \text{ on } (0,L), \\
      p_{\beta}(0) = \frac{s_f}{s_h}, \qquad 
      p_{\beta}^{'}(0^+) = -\sqrt{2(F(1) - F(\frac{s_f}{s_h}))}.
    \end{cases}
  \end{equation}
  Since we are looking for a decreasing solution, we may invert it and define $X_{\beta} :\mathclose{]}p_{\beta}(L), p_{\beta}(0)\mathclose{[}\to \mathclose{]}0,L\mathclose{[}$ such that $p_{\beta}(X_{\beta}(p))=p$.
  We introduce the function
  \begin{align*}
    w_{\beta}(p) = \frac{1}{2}p_{\beta}^{'}(X_{\beta}(p))^{2}.
  \end{align*}
  Then, we have, for $p\in (0,\frac{s_f}{s_h})$
  \begin{equation}\label{equation7}
    \begin{cases}
      \displaystyle \frac{dw_{\beta}}{dp} = 2 C\sqrt{2 w_{\beta}(p)} - f(p) - C^2 g(p),\\[1em]
      w_{\beta}(\frac{s_f}{s_h}) = F(1)-F(\frac{s_f}{s_h}).
    \end{cases}
  \end{equation}
  The question is to prove that there exists $\alpha\in (\displaystyle\argmax_{(0,1)} g,\dfrac{s_f}{s_h})$ such that
  $$
  w_\beta(\alpha) = \dfrac{1}{2} \big(\sqrt{-2F(\alpha)} + Cg(\alpha)\big)^2.
  $$
  
\textbf{Bound from above~:}
To ease the study, we introduce the function $z(\tau) = w_\beta(\frac{s_f}{s_h}-\tau)$. Then, $z$ satisfies, for $\tau>0$,
  \begin{equation}\label{equation_z}
    \begin{cases}
      \displaystyle z'(\tau) = - 2 C\sqrt{2 z(\tau)} + f(\frac{s_f}{s_h}-\tau) + C^2 g(\frac{s_f}{s_h}-\tau),\\[1em]
      z(0) = F(1)-F(\frac{s_f}{s_h}).
    \end{cases}
  \end{equation}
  Let us define
  $$
  \widetilde{z}(\tau) = - F(\frac{s_f}{s_h} - \tau) + \frac{C^2}{8} g_\infty^2,
  $$
  where $F$ is the antiderivative of $f$ vanishing in $0$, and $g_\infty = \max_{(0,\frac{s_f}{s_h})} g$. Since $f<0$ on $(0,\frac{s_f}{s_h})$, we have
  $$
  \widetilde{z}(\tau) \geq \frac{C^2}{8} g_\infty^2, \quad \text{ or equivalently }\quad
  C^2 g_\infty - 2C\sqrt{2\widetilde{z}(\tau)} \leq 0.
  $$
  Thus, we compute
  $$
  \widetilde{z}^{'}(\tau) = f(\frac{s_f}{s_h}-\tau) \geq C^2 g_\infty - 2C\sqrt{2\widetilde{z}(t)} + f(\frac{s_f}{s_h}-\tau).
  $$
  Moreover, we have
  $$
  \widetilde{z}(0) = - F(\frac{s_f}{s_h}) + \frac{C^2}{8} g_\infty^2.
  $$
  Then, for $C^2 \geq \frac{8F(1)}{g_\infty^2}$, we deduce that $\widetilde{z}(0)\geq z(0)$, thus $\widetilde{z}$ is a super-solution for the problem \eqref{equation_z}. As a consequence, for any $\alpha < \frac{s_f}{s_h}$, we have
  \begin{equation}\label{bornesup_w}
    w_\beta(\alpha) = z\big(\frac{s_f}{s_h}-\alpha\big) \leq \widetilde{z}\big(\frac{s_f}{s_h}-\alpha\big) = - F(\alpha) + \frac{C^2}{8} g_\infty^2.
  \end{equation}
  Therefore, if we take $\alpha = \argmax g \in (0,\frac{s_f}{s_h})$, we have $g_\infty = g(\alpha)$ and from \eqref{bornesup_w} we get
  \begin{equation}
    \label{ineq1'}
    w_\beta(\alpha) \leq - F(\alpha) + \frac{C^2}{8} g(\alpha)^2 \leq \frac {1}{2} \big(\sqrt{-2F(\alpha)}+Cg(\alpha)\big)^2.
  \end{equation}

  Moreover, when $\alpha\to \frac{s_f}{s_h}$, we have that 
  $\frac 12 \big(\sqrt{-2F(\alpha)} + Cg(\alpha)\big)^2 \to -F(\frac{s_f}{s_h})$ and $w_\beta(\alpha) \to w_\beta(\frac{s_f}{s_h}) = F(1)-F(\frac{s_f}{s_h})$ from \eqref{equation7}.
  Since $F(1)>0$, we deduce that, by continuity, for $\alpha$ in the neighborhood of $\frac{s_f}{s_h}$, we have
  \begin{equation}
    \label{ineq2}
    \frac {1}{2} \big(\sqrt{-2F(\alpha)} + Cg(\alpha)\big)^2 < w_\beta(\alpha).  
  \end{equation}

  From \eqref{ineq1'} and \eqref{ineq2}, we deduce, by continuity, that for $C^2\geq \frac{\textcolor{blue}{8}F(1)}{g_\infty^2}$, there exists $\alpha\in(\argmax g,\frac{s_f}{s_h})$ such that
  $$
  \frac{1}{2} \big(\sqrt{-2F(\alpha)} + Cg(\alpha)\big)^2 = w_\beta(\alpha).  
  $$
  Then, we have constructed a solution of the boundary value problem \eqref{bvp}.
  Moreover, from Lemma \ref{lem:monoton}, we deduce that $p_\beta$ is nonincreasing.
  Finally, to recover the value of $L$ it suffices to note that
  \begin{equation*}
    L = \int_\beta^\alpha X_\beta'(p) \, dp.
  \end{equation*}
  To compute it from $w_{\beta}$, we observe that
  \begin{equation*}
    X_\beta'(p) = \frac{1}{p_\beta'(X_\beta(p))}.
  \end{equation*}
  Hence, we have
  \begin{equation*}
    L = \int_\alpha^\beta \frac{1}{\sqrt{2 w_{\beta}(p)}} \, dp.
  \end{equation*}
\end{proof}
\begin{remkm}
In the proof, we have established that we can take actually $C^* \leq \frac{2\sqrt{2F(1)}}{g_\infty}$.
\end{remkm}

\begin{lemm}
  Let us assume that there exists a $(C_1,L_1)$-barrier, then for any $L_2>L_1$ there exists a $(C_2,L_2)$-barrier for $C_2=C_1\dfrac{L_1}{L_2}$.
\end{lemm}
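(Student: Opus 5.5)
The plan is to exploit the invariant $C L=\ln\big(K(L)/K(0)\big)$: the two environments $K_1$ (with $(C_1,L_1)$) and $K_2$ (with $(C_2,L_2)$, $C_2L_2=C_1L_1$) have the same constant values $A$ and $Ae^{C_1L_1}$ outside the transition zone and differ only by a stretching of that zone. So I would first stretch the given barrier and then correct the result with a comparison argument. Let $P_1$ be a $(C_1,L_1)$-barrier, set $\lambda=L_1/L_2\in(0,1)$, and define $Q(y)=P_1(\lambda y)$ for $y\in(0,L_2)$.

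A direct computation gives $Q'=\lambda P_1'$ and $Q''=\lambda^2P_1''$. Since $C_2=\lambda C_1$, every term on the left-hand side of the inner equation of \eqref{equationelliptique} scales by $\lambda^2$:
\begin{equation*}
-Q''-2C_2Q'-C_2^2g(Q)=\lambda^2 f(Q) \quad\text{on } (0,L_2).
\end{equation*}
The jump conditions scale by $\lambda$, so $Q'(0^-)-Q'(0^+)=\lambda C_1 g(Q(0))=C_2g(Q(0))$, and similarly at $L_2$. Thus the interface conditions are preserved exactly, and the only defect is the factor $\lambda^2$ in front of $f$ inside $(0,L_2)$. Outside the transition zone I would not stretch at all. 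Instead I would glue, at $y=0$ and $y=L_2$, the exact solutions of $-p''=f(p)$ that connect to $1$ and $0$ and are determined by the energy identities of \eqref{equation5}. This matching is legitimate because $Q(0)=P_1(0)$ and $Q(L_2)=P_1(L_1)$, and the one-sided derivatives required by \eqref{bvp} are fixed by $\beta,\alpha,C$ through $C g(\beta)$ and $C g(\alpha)$.

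These derivative requirements are where I expect the main obstacle, and there are two equivalent ways to see it. First, the stretched inner derivative $Q'(0^+)=\lambda P_1'(0^+)$ does not equal the value $-\sqrt{2(F(1)-F(\beta))}-C_2g(\beta)$ required by \eqref{bvp}. Second, inside $(0,L_2)$ the factor $\lambda^2$ multiplies $f$, which changes sign at $\theta_1$. So $Q$ is neither a global sub- nor a global super-solution; the pure scaling does not close.

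To overcome this, I would work with the reformulation \eqref{bvp} and the shooting framework of Lemma \ref{existence CL} rather than with $Q$ itself. The plan has three steps.
\begin{itemize}
\item Keep $\beta=P_1(0)$ and shoot from $0$ with $C=C_2$. The Riccati-type equation \eqref{equation7} for $w_\beta$ depends continuously on $C$.
\item Show that the mismatch function $\Phi(\alpha)=w_\beta(\alpha)-\frac12\big(\sqrt{-2F(\alpha)}+C_2g(\alpha)\big)^2$ still changes sign on $(0,\beta)$. Near $\alpha=\beta$ it is positive by \eqref{ineq2}, since $F(1)>0$. For the opposite sign I would try to compare $w_\beta$ for $C_2<C_1$ with the one for $C_1$, using the $\lambda$-rescaled profile as a barrier function in the phase plane.
\item Recover $L=\int_\alpha^\beta (2w_\beta)^{-1/2}\,dp$, show it equals $L_2$ when $C_2L_2=C_1L_1$, and conclude monotonicity by Lemma \ref{lem:monoton} and the barrier property by Lemma \ref{lemma1}.
\end{itemize}
The delicate point is certifying the sign change of $\Phi$ and the exact matching of the length $L$ simultaneously. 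If this fails, my fallback is a monotone-iteration argument on the time-dependent problem \eqref{notre systeme} with $K_2$: use $Q$ (modified where $f(Q)>0$) as a super-solution together with an ordered sub-solution, and extract a stationary limit with the correct behaviour at $\pm\infty$.
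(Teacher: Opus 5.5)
\textbf{Genuine gap, and the statement itself fails for large $L_2$.} Your plan stops at exactly the two steps that carry the content, and neither can be completed.

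With $C=C_2$ and $\beta=P_1(0)$ fixed, the shooting has a single free parameter $\alpha$, and it is used up by the condition $\Phi(\alpha)=0$. The length $\int_\alpha^\beta(2w_\beta)^{-1/2}\,dp$ is then an output, and nothing in \eqref{equation7} ties it to $C_1L_1/C_2$. So ``show it equals $L_2$'' is an extra equation with no unknown left. The sign change of $\Phi$ at the lower end is also only announced, and the fallback names no sub-solution.

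More fundamentally, the statement is false for large $L_2$. Let $p$ be a $(C,L)$-barrier with values in $(0,1)$. Set $h=f+C^2g$ and $H=F+C^2G$ with $G'=g$, $G(0)=0$, and let $E=\frac12(p')^2+H(p)$ on $(0,L)$. As in the proof of Lemma \ref{lem:monoton}, $E'=-2C(p')^2$. The outer energy identities together with the jump conditions give
\[
E(0^+)=F(1)+Cg(\beta)\sqrt{2(F(1)-F(\beta))}+O(C^2),\qquad E(L^-)=O(C),
\]
uniformly in $\alpha,\beta,L$. Hence $2C\int_0^L(p')^2\,dx=F(1)+O(C)$. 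Let $M$ be a uniform bound on $|p'|$, available from $\frac12(p')^2\le E(0^+)-\min_{[0,1]}H$.
\begin{itemize}
\item For $C$ small, $p'(0^+)=-\sqrt{2(F(1)-F(\beta))}-Cg(\beta)<0$, because $|g(\beta)|/\sqrt{F(1)-F(\beta)}$ is bounded on $(0,1)$.
\item At a first zero $x_m$ of $p'$ one would have $h(p(x_m))<0$ (equality forces $p\equiv p_h$). Hence $p(x_m)<p_h$ and $E(x_m)=H(p(x_m))<0$.
\item But $E(x_m)\ge E(0^+)-2CM\ge F(1)-O(C)>0$, a contradiction.
\end{itemize}
Thus $p$ is decreasing on $(0,L)$, and $F(1)+O(C)=2C\int_0^L(p')^2\le 2CM$. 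So there is $c_0>0$, depending only on $f$ and $g$, such that no $(C,L)$-barrier exists for $C<c_0$, whatever $L$; this is the stationary counterpart of Lemma \ref{c small}. Since barriers do exist (Theorem \ref{thmprincipale}), choosing $L_2>C_1L_1/c_0$ gives $C_2=C_1L_1/L_2<c_0$, and then there is no $(C_2,L_2)$-barrier.

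\textbf{Comparison with the paper.} The paper stretches exactly as you do, and uses the point you dismissed. On $(0,L_2)$ the stretched profile takes values in $(\alpha,\beta)\subset(0,s_f/s_h)\subset(0,\theta_1)$, where $f<0$. (The paper assumes this range for a general barrier without justification; it does hold for the barriers of Lemma \ref{existence CL}.) Hence $\lambda^2 f(Q)\ge f(Q)$, and the interior defect has the super-solution sign. The paper then compares one-sided derivatives and concludes.

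Your diagnosis at $x=0$ is nevertheless correct, and it is exactly where that argument fails. Glue the exact left profile at $0$. Being a super-solution there requires $Q'(0^-)-Q'(0^+)\ge C_2g(\beta)$, whereas in fact
\[
Q'(0^-)-Q'(0^+)=C_2g(\beta)-(1-\lambda)\sqrt{2(F(1)-F(\beta))}.
\]
So the paper's inequality $p_2'(0^+)\ge-\sqrt{2(F(1)-F(\beta))}-C_2g(\beta)$ points in the sub-solution direction at the kink. In any case, a super-solution alone would not produce a solution of \eqref{equationelliptique}.

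Any correct version of the lemma must keep $C_2$ bounded away from $0$, which the scaling $C_2L_2=C_1L_1$ does not.
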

\begin{proof}
  Let $p_1$ be a $(C_1,L_1)$-barrier, then there exists $0<\alpha<\beta\leq \frac{s_f}{s_h}$ such that it is a solution of \eqref{bvp}. Let $L_2>L_1$ and define $p_2(x) = p_1(\frac{L_1}{L_2}x)$ for $x\in (0,L_2)$ and $C_2=C_1\dfrac{L_1}{L_2}$. We compute, for $x\in (0,L_2)$ and $y=\frac{L_1}{L_2}x \in (0,L_1)$, using the first equation in \eqref{bvp},
  \begin{align*}
    -p_2^{''}(x) -2C_2 p_2^{'}(x)
    & = \left(\frac{L_1}{L_2}\right)^2 \left(-p_1^{''}(y) -2C_1p_1'(y)\right)  \\
    & = \left(\frac{L_1}{L_2}\right)^2 \left(C_1^2 g(p_1(y)) + f(p_1(y))\right) \\
    & = (C_2)^2 g(p_2(x)) + \left(\frac{L_1}{L_2}\right)^2 f(p_2(y)) \\
    & \geq (C_2)^2 g(p_2(x)) + f(p_2(y)), 
  \end{align*}
since we recall that $f<0$ on $(0,\frac{s_f}{s_h})$ and $L_1<L_2$.
And, from the second equation in \eqref{bvp},
\begin{align*}
  p_2^{'}(0^+)
  & = \frac{L_1}{L_2} p_1^{'}(0^+) = - \frac{L_1}{L_2}\sqrt{2(F(1)-F(\beta))} - C_2 g(\beta)   \\
  & \geq - \sqrt{2(F(1)-F(\beta))} - C_2 g(\beta)
\end{align*}
Similarly,
\begin{align*}
  p_2^{'}(L^-)
  & \geq -\sqrt{- 2 F(\alpha)} - C_2 g(\alpha).
\end{align*}
Hence, $p_2$ is a super-solution for \eqref{bvp} with $C=C_2$ and $L=L_2$. Hence there exists a $(C_2,L_2)$-barrier.
\end{proof}

\medskip
Having established the existence of a barrier, we now proceed with the proof of the final main result, which is Proposition \ref{principe de comparaison}.
\begin{proof}[Proof of Proposition \ref{principe de comparaison}]~~\par
\vspace{0.2cm}
The result follows directly from the maximum principle, given that $ \bar{P} $ is a barrier and $ p^{ini} \leq \bar{P} $.  
Therefore, the solution remains bounded by $ \bar{P} $ for all $ t \geq 0 $, implying that the spreading stops.
\end{proof}

The results of Theorem \ref{thmprincipale} and Proposition \ref{principe de comparaison} conclude that if the initial positive solution is smaller than the stationary solution, it can never reach or exceed the stationary solution. Indeed, the stationary solution acts as a barrier to propagation. The term 'blocking' is therefore justified, as it describes the situation where the solution consistently remains below the stationary solution, thus confirming the phenomenon of propagation blocking.
\newpage
\vspace{0.2cm}

\section{ Numerical simulations}\label{section5}~~\par
We performed numerical simulations of the mathematical equation \eqref{notre systeme} to illustrate our theoretical results. We used the dimensionless parameter values listed in Table~\ref{tab:parameters} below, which were estimated based on the conditions identified in the theoretical analysis.
\begin {table}[!ht] 
\begin{center}
\resizebox{\textwidth}{!}{%
\begin{tabular}{||c c c ||} 
 \hline
 Parameters & Interpretations &Values \\ [0.5ex] 
 \hline\hline
 $s_f$ & a dimensionless parameter that reflects
the reduced fecundity of infected mosquitoes &$0.015$ \\ 
 \hline
 $F_u$ & Fecondity & $1$ \\
 \hline
 $d_u$  & Mortality rate of non-infected mosquitoes & $0.30$\\
 \hline
 $d_i$  & Mortality rate of infected mosquitoes & $0.315$\\
 \hline
 $\delta$ & The ratio $\frac{d_i}{d_u}$ & $1.05$ \\
 \hline
 $s_h$ & Cytoplasmic incompatibility parameter & $0.7$ \\ [1ex] 
\hline
 $D$ & The diffusion constant & $0.2$\\
 \hline
\end{tabular}}
\vspace{0.5cm}
\end{center}
\caption {Specified parameter values for the model. The values used in the simulations are inspired by those presented in \cite{duprez2021optimization}} 

\label{tab:parameters} 
\end {table}\\
We performed numerical simulations of the system in a one-dimensional space. Two different final times were considered in order to illustrate the propagation and the blocking regimes: $T = 300$ days for the propagation case and $T = 400$ days for the blocking case. The numerical domain used was the interval $[-L, L]$, with $L=20$, representing a total width of 40 units. This domain was discretized using a semi-implicit finite difference scheme with 2000 spatial points, resulting in a spatial step size of 
$dx = \frac{2L}{2000}$. A time step of $dt = 0.1$. The implicit scheme efficiently handled the complex terms in the system.

Neumann boundary conditions were applied at the edges of the domain, ensuring consistent behavior at the boundaries.

In these simulations, the function $K(x)$, which directly influences the propagation dynamics, was defined as follows:
\begin{equation*}\label{equation pour K1}
K(x) = \begin{cases}
A, &\text{for } x < 0,\\
    Ae^{Cx},& \text{for } x \in [0, \frac{L}{4}],\\
Ae^{CL/4},&\text{for } x > \frac{L}{4}.
\end{cases}
\end{equation*}
The constant $C$, which appears in the exponential expression of $K(x)$, plays a crucial role in determining the propagation dynamics. As proved in Lemma \ref{c small}, small values of $C$ allow continuous propagation. For instance, with $C = 10^{-6}$, the results illustrate uninterrupted propagation, as shown in Figures~\ref{fig:infected} and \ref{fig:non_infected}.
\begin{figure}[!ht]
\begin{minipage}[ht]{0.45\linewidth}
  \centering  
  \includegraphics[width=\textwidth]{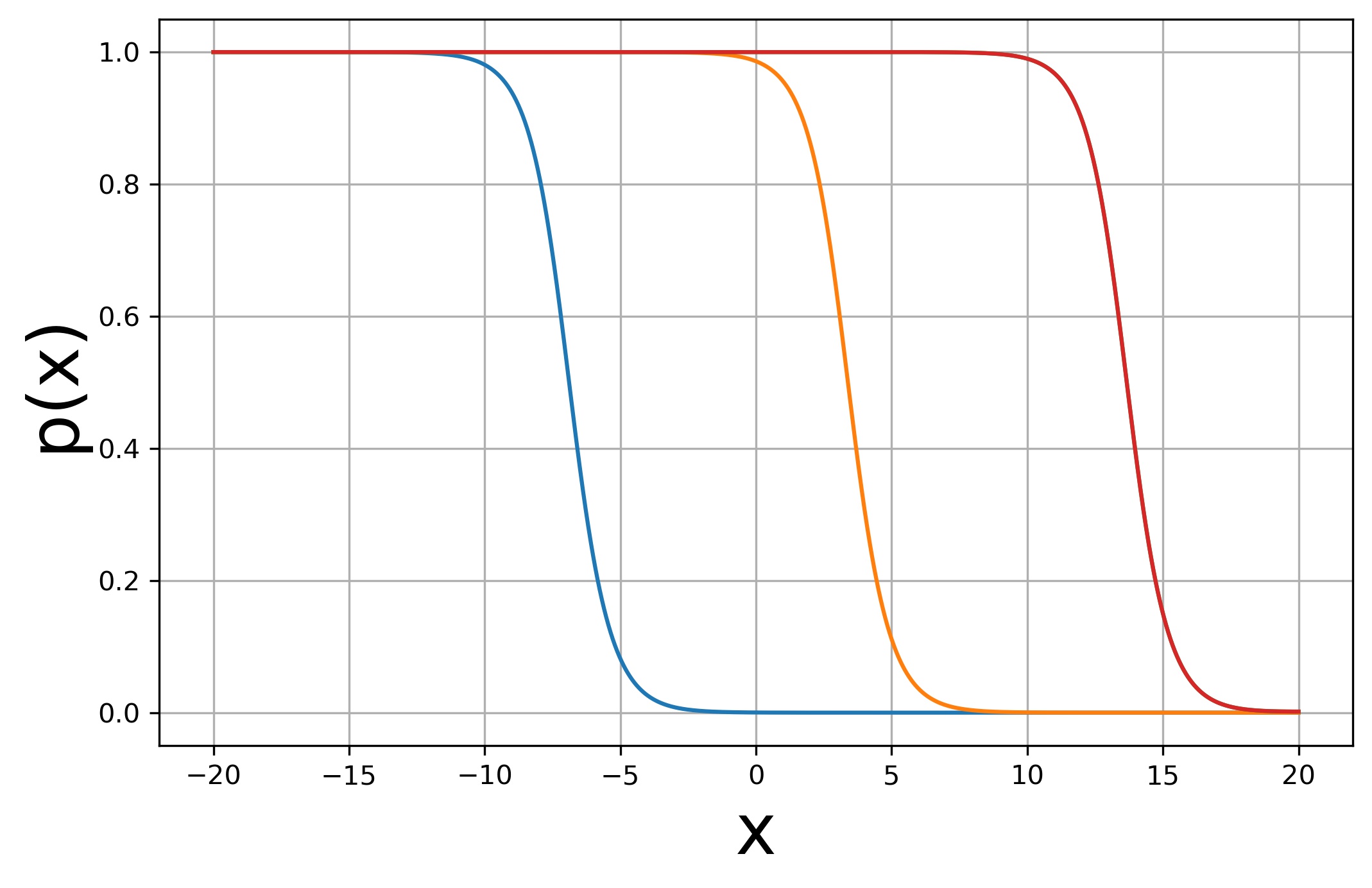}
 \caption{Propagation of the Proportion of Infected Mosquitoes at Constant Speed Over Time.} \label{fig:infected}
\end{minipage} 
  \hspace{1cm} 
\begin{minipage}[ht]{0.45\linewidth}
\centering 
  \includegraphics[width=\textwidth]{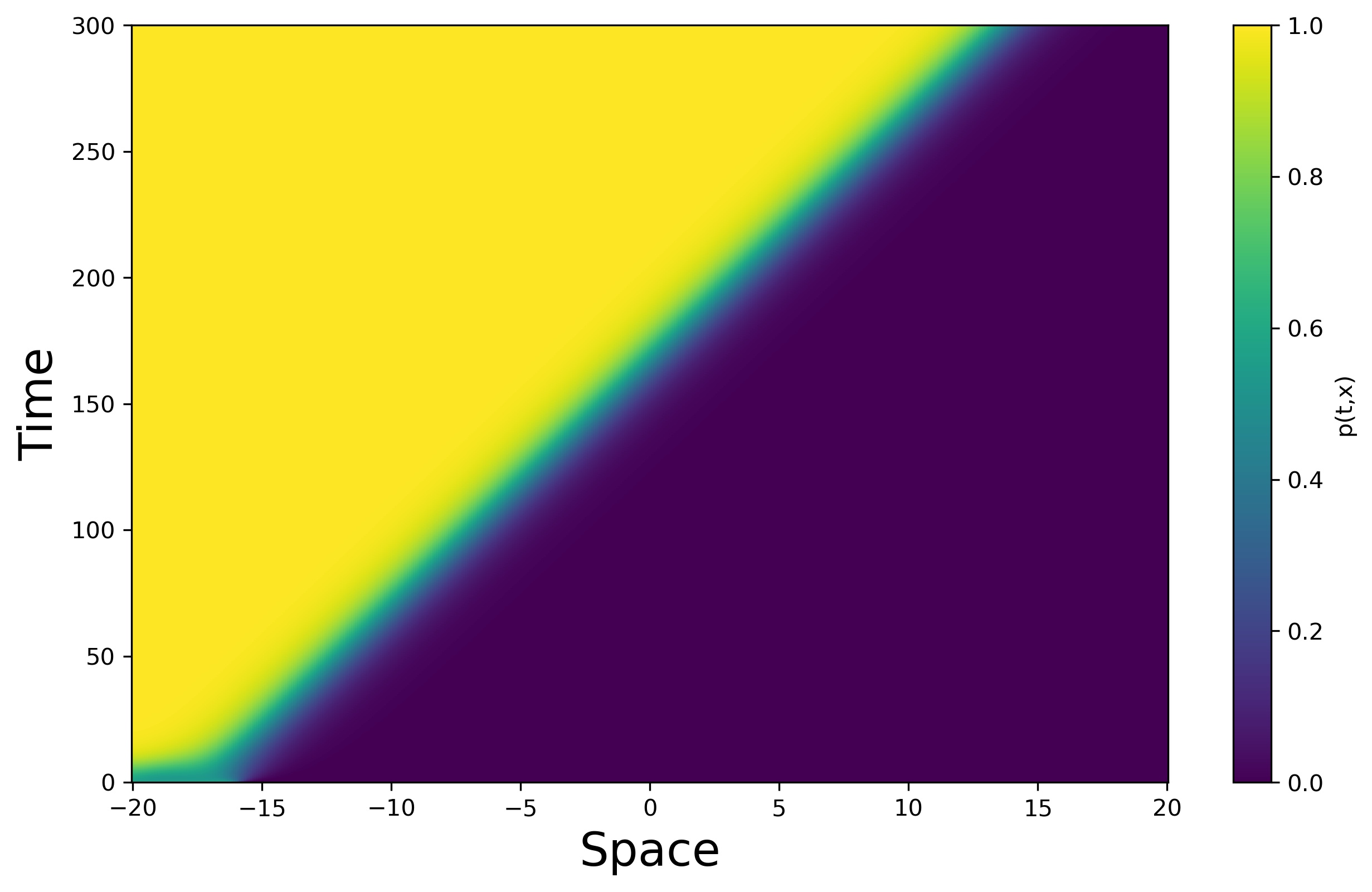}
  \caption{Spatio-Temporal Dynamics of the Proportion of Infected Mosquitoes.}\label{fig:non_infected}
\end{minipage} 
\end{figure}\\
In a second simulation, we studied a propagation blocking scenario by extending the simulation duration to $T = 400$ days. A moderately high value of $C$, sufficiently large to cause blocking, was used. Specifically, we set $C = 6$. 

The observed blocking can be explained by the exponential nature of the function $K(x)$, where higher values of $C$ effectively limit propagation. The results illustrate the blocking of propagation, as shown in Figures~\ref{fig:infectedblocage} and \ref{fig:non_infectedblocage}.

\begin{figure}[ht]
\begin{minipage}[h]{0.45\linewidth}
  \centering  
  \includegraphics[width=\textwidth]{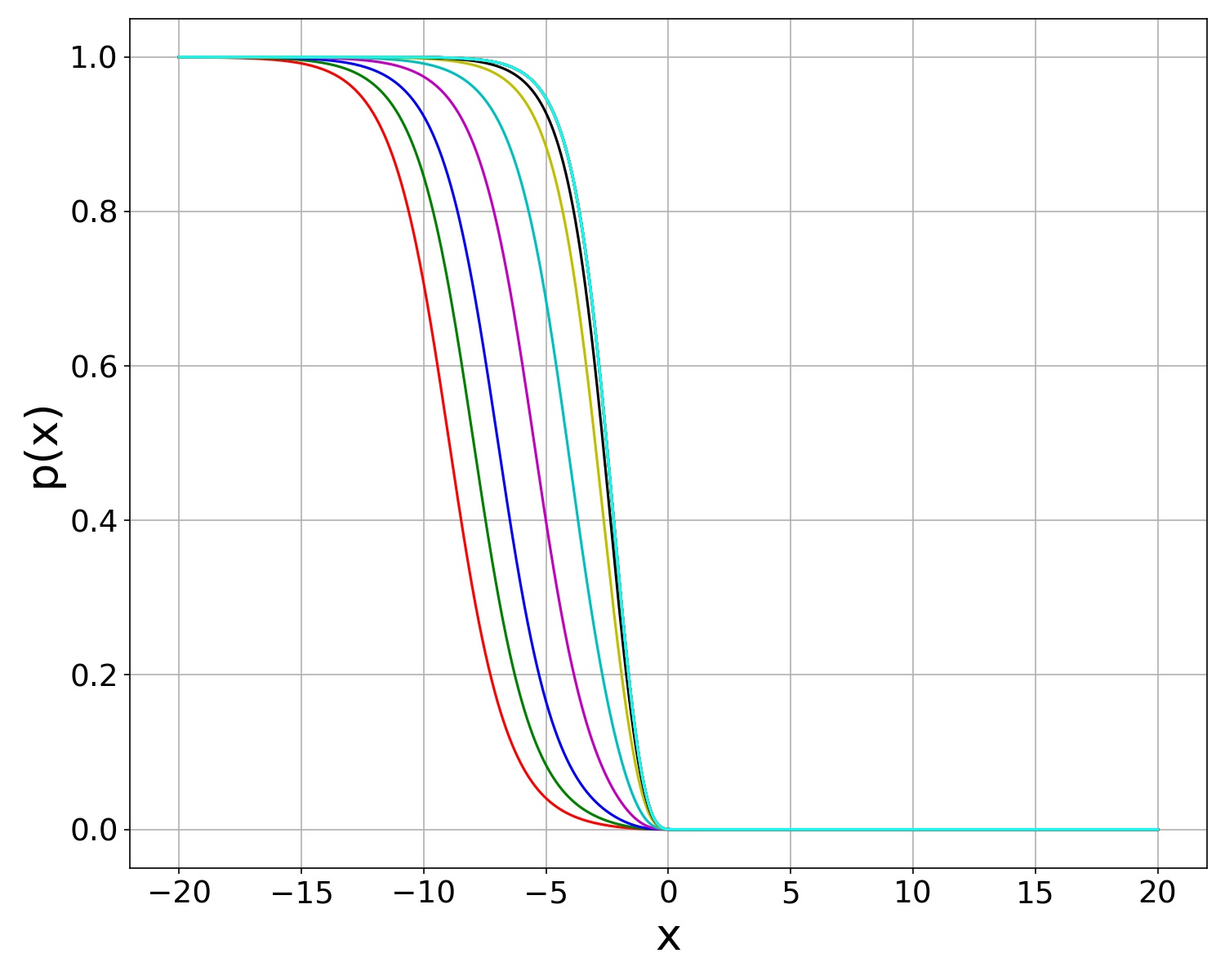}
 \caption{Blocking of the Propagation of the Proportion of Infected Mosquitoes.} \label{fig:infectedblocage}
\end{minipage} 
  \hspace{1cm} 
\begin{minipage}[ht]{0.45\linewidth}
\centering 
  \includegraphics[width=\textwidth]{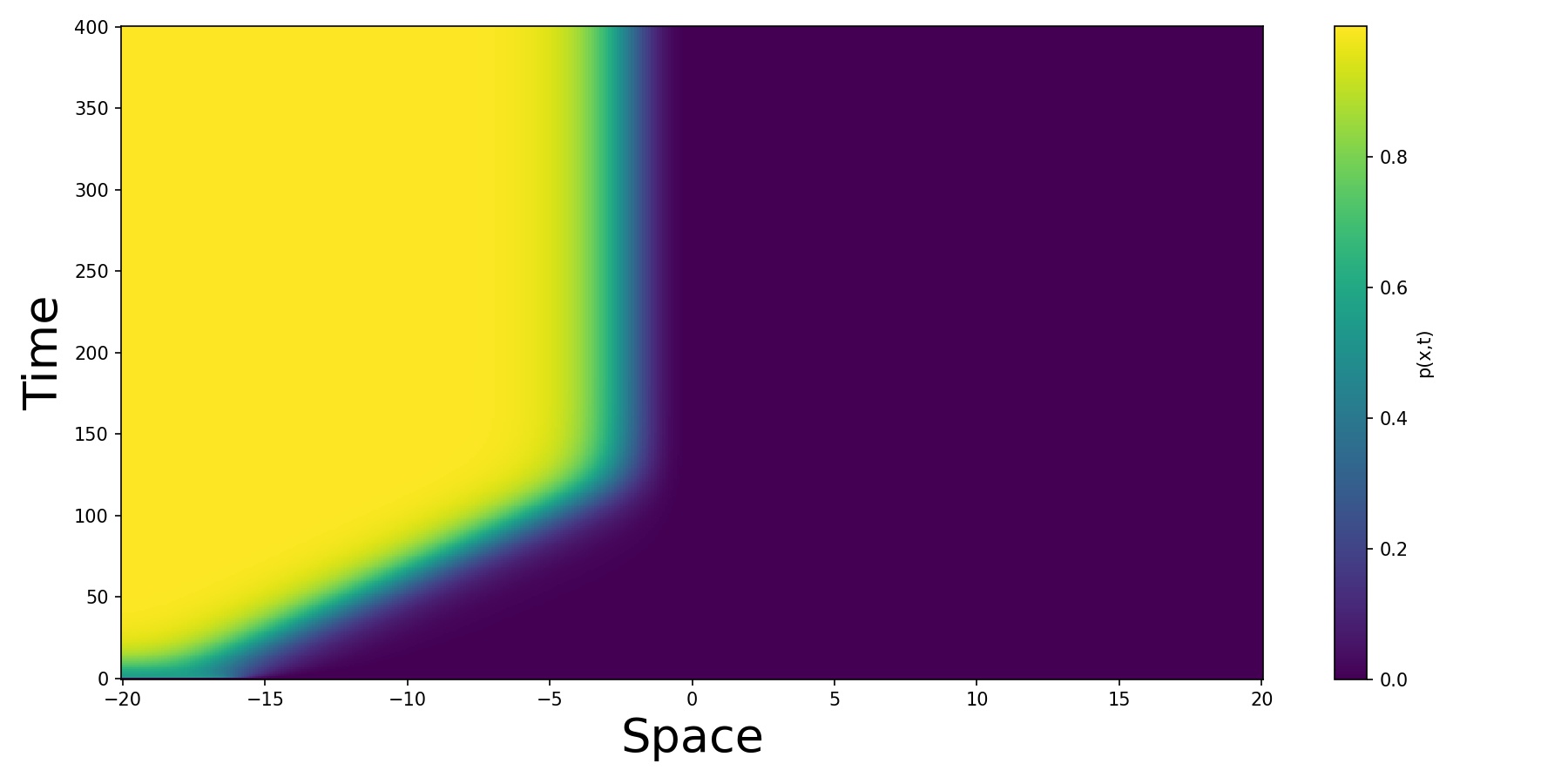}
  \caption{Spatiotemporal Dynamics of the Proportion of Infected Mosquitoes Spread and Blockage.}\label{fig:non_infectedblocage}
\end{minipage} 
\end{figure}
\newpage

\section{Discussion and Perspectives}\label{section6}~~\par
In this paper, we rigorously established the convergence of a reaction-diffusion system to a reduced model describing the frequency of a variant. This provides a mathematical justification for the use of simplified models, particularly bistable equations. We then used this reduced model to analyze the impact of environmental heterogeneity on the spread of Wolbachia.

Several research directions can be considered to extend this work:
\begin{itemize}
    \item[$\bullet$] Extension to a two-population system: The idea is to study a system of two coupled reaction-diffusion equations representing the dynamics of infected and uninfected populations. Although the analysis becomes more complex, the monotonicity properties of such systems may allow for a rigorous study.

\item[$\bullet$] Extension to higher spatial dimensions: So far, the study has been limited to a one-dimensional space. Considering models in two or three dimensions would allow for a more realistic representation of environments and could capture more complex spatial effects.\end{itemize}

\bibliographystyle{abbrv}
\bibliography{bibliography}
\end{document}